\DeclareMathOperator{\La}{\mathcal{L}}
\DeclareMathOperator{\Nb}{\mathbb{N}}
\DeclareMathOperator{\Rb}{\mathbb{R}}
\DeclareMathOperator{\join}{\vee}
\DeclareMathOperator{\meet}{\wedge}
\DeclareMathOperator{\id}{\mathsf{id}}
\DeclareMathOperator{\Prop}{\textbf{Prop}}
\newcommand{\type}[1]{\mathsf{{#1}}}
\newcommand{\term}[1]{\mathsf{{#1}}}
\newcommand{\true}{\mathtt{true}}
\newcommand{\false}{\mathtt{false}}
\newcommand{\rest}[2]{#1\big|_{#2}} %
\providecommand*{\xmapstofill@}{%
  \arrowfill@{\mapstochar\relbar}\relbar\rightarrow
}
\providecommand*{\xmapsto}[2][]{%
  \ext@arrow 0395\xmapstofill@{#1}{#2}%
}
\def\slashedarrowfill@#1#2#3#4#5{%
  $\m@th\thickmuskip0mu\medmuskip\thickmuskip\thinmuskip\thickmuskip
   \relax#5#1\mkern-7mu%
   \cleaders\hbox{$#5\mkern-2mu#2\mkern-2mu$}\hfill
   \mathclap{#3}\mathclap{#2}%
   \cleaders\hbox{$#5\mkern-2mu#2\mkern-2mu$}\hfill
   \mkern-7mu#4$%
}
\def\rightslashedarrowfill@{%
  \slashedarrowfill@\relbar\relbar\mapstochar\rightarrow}
\newcommand\xslashedrightarrow[2][]{%
  \ext@arrow 0055{\rightslashedarrowfill@}{#1}{#2}}
\newcommand{\allows}[2]{\Diamond_{#2}^{#1}}
\newcommand{\ensure}[2]{\Box_{#2}^{#1}}
\theoremstyle{definition}
\newtheorem{defn}{Definition}
\newtheorem{ex}[defn]{Example}
\newtheorem{rmk}{Remark}
\newtheorem{lemma}[defn]{Lemma}
\newtheorem{prop}[defn]{Proposition}
\definecolor{darkblue}{rgb}{0,0,0.7}
\title{Behavioral Mereology: A Modal Logic for Passing Constraints}
\author{Brendan Fong, David Jaz Myers, David I. Spivak}
\date{}
\begin{document}
\maketitle

\begin{abstract}
	Mereology is the study of parts and the relationships that hold between them. We introduce a behavioral approach to mereology, in which systems and their parts are known only by the types of behavior they can exhibit. Our discussion is formally topos-theoretic, and agnostic to the topos, providing maximal generality; however, by using only its internal logic we can hide the details and readers may assume a completely elementary set-theoretic discussion. We consider the relationship between various parts of a whole in terms of how behavioral constraints are passed between them, and give an inter-modal logic that generalizes the usual alethic modalities in the setting of symmetric accessibility.
\end{abstract}

\section{Introduction}

Many thinkers, from Heidegger to Isham and D\"{o}ring have asked ``What is a thing?'' \cite{heidegger1972thing, doring2010thing}. Heidegger for example says,
\begin{quote}
    From the range of the basic questions of metaphysics we shall here ask this one question: What is a thing? The question is quite old. What remains ever new about it is merely that it must be asked again and again.
\end{quote}
In this article, our way of asking about things is focused on the mereological aspect of things, i.e.\ the relationship between parts and wholes. The point of departure is that, at the very least, a part affects a whole: ``when you pull on a part, the rest comes with.'' For example, wherever my left hand is, my right hand is not far away. A whole then, has the property that it coordinates constraints---or said another way, it enables constraints to be \emph{passed}---between parts. In this article, we present a logic for constraint passing.

Our approach has roots in categorical logic, and in particular Lawvere's
observation that existential and universal quantification can be characterized
as adjoints to pullback, in any topos. In particular, a system, or more
evocatively a \emph{behavior type} $B_S$, will be a set which we imagine as the
set of ways a system can behave over time. If $S$ is a dynamical system, then we
may think of $B_S$ as the set of lawful trajectories of this system.\footnote{More generally, we may take a behavior type to be an object in
  any topos \cite{MacLane.Moerdijk:1992a}. This allows behavior types which
  where the behaviors may vary in time (or space). Since we don't expect our
  audience to know any topos theory, and since all the ideas we describe will
  make sense in any topos, we will use the language of sets through this paper,
  and leave experts to make the topos-theoretic translation themselves.} We are
inspired here by Willem's behavioral approach to control theory (see \cites{Willems:1998a}{Willems:2007a}{Willems.Polderman:2013a}).

We work behavior-theoretically; to paraphrase Gump, ``X is as X does''. We
associate to a system $S$ its set $B_S$ of possible behaviors --- $B_S$ is the
\emph{behavior type} of $S$. If $P$ is a part
of our system $S$, then if we know the total behavior of $S$ we also know the
behavior of $P$; so, we have a function $|_P : B_S \to B_P$ which we think of as
``restricting'' the behavior $b \in B_S$ of $S$ to the behavior $b|_P \in B_P$
of $P$. However, we are considering $P$ \emph{as a part of} $S$, so every
behavior of $P$ must come from some behavior of the whole system $S$; so, the
restriction map $\rest{}{P}\colon B_S \to B_P$ must be surjective.

We use this analysis of
parthood to \emph{define} a part of the system $S$ to be a quotient of $B_S$, i.e.\ a surjection $B_S \twoheadrightarrow B_P$ from the behavior type.
This surjection describes how a behavior of the entire system determines a behavior of the part, and any behavior of the part \emph{qua} part must extend to the behavior of the whole: for any behavior of my hand, there exists at least one compatible behavior of my whole body. Given a behavior on one part, we can consider all possible extensions to the whole, and subsequently ask how those extensions restrict to behaviors of other parts. In this way one part may constrain another.

To describe the logic of these constraints, we introduce two new logical
operators, or ``inter-modalities'', closely related to the classical ``it is
possible that'' and ``it is necessary that'' modalities, known as the
\emph{alethic} modalities \cite{kripke1963semantical}. We view a constraint
$\phi$ on a part $P$ as a predicate on the behaviors of $P$ --- the predicate
``satisfies the constraint $\phi$''. We may ask whether satisfying this
predicate allows, or whether it ensures, various behaviors on another part
$B_Q$: the constraint $\phi$ is passed in these two ways  from $P$ to $Q$.  To
be a bit more explicit, the first new operator is called \emph{allows}, written
$\allows{P}{Q}\phi$. This describes the set of behaviors in $B_Q$ for which
there exists an extension in $B_S$ that restricts to some behavior satisfying
$\phi$. The second is the operator \emph{ensures}, written $\ensure{P}{Q}\phi$.
This describes the set of behaviors in $B_Q$ for which all extensions to $B_S$
restrict to some behavior satisfying $\phi$ in $B_P$. Our goal in this paper is
to describe the properties of these inter-modalities (``inter'' because they go
from one part to another), and to demonstrate their utility with some elementary
examples.

Our inter-modalities allow us to faithfully express concepts of the behavioral
approach to control theory as expressed in Willems' \emph{Open Dynamical Systems
and their Control} \cite{Willems:1998a}. In particular:
\begin{itemize}
\item A time-invariant system $S$ is \emph{controllable} if and only if for any
  two behaviors $b_1$ and $b_2$, there is a time delay $D$ such that the behavior
  $b_1|_{<0}$ restricted to time before $0$ and $b_2|_{>D}$ restricted to time
  after $D$ are \emph{compatible} in the sense of Definition \ref{defn:compatible}.
\item If $b_1$ is a behavior of a part $P$ of $S$ and $b_2$ a behavior of part
  $Q$ of $S$, then $b_1$ is \emph{observable} from $b_2$ if and only if $b_1$
  \emph{determines} $b_2$ in the sense of Definition \ref{defn:determines}.
\item If $C$ is the constraint a controller $P$ places on the behavior of a plant
  $Q$, then the \emph{controlled behavior} is the constraint $\allows{P}{Q}C$ of
  behaviors of $Q$ which are \emph{allowed} by $C$ in the sense of Definition
  \ref{defn:intermodalities}.
\item The \emph{control problem} is the problem of choosing a constraint $C$ on
  $P$ so that a constraint $\phi$ of the plant $Q$ is satisfied. The universal
  solution to this problem is given by the constraint $\ensure{Q}{P} \phi$ of
  behaviors on $P$ which \emph{ensure} that $Q$ behaves according to $\phi$, in
  the sense of Definition \ref{defn:intermodalities}.
\end{itemize}
We believe the logic for constraint passing presented in this paper can be a
useful tool for formalizing arguments in the behavioral approach to control theory.

\section{Systems and Their Parts: Behavioral perspective}

When we constrain a part of a system, we are constraining \emph{what it does}. This suggests that we should model a system by its \emph{type of possible behaviors}, its \textbf{behavior type}.

\subsection{Systems as behavior types}

Luckily, we won't need to settle on what precisely a behavior type is, so long
as we can reason about it logically. For this, we need the behavior type of our
system and its parts to be objects in a topos; then, we can use the internal
logic of the topos to reason about our behavior types. A topos can be understood
as a system of \emph{variable sets}; in our case, this allows us the freedom to
have sets varying in time, in space, or according to different observers. We
will just work in the topos of sets and functions, leaving it to the experts to
extend the theory to arbitrary toposes.

So, a \emph{behavior type} is simply a set whose elements are regarded as possible behaviors of a system.
We can think of it as the ``phase space'' of our system, in a general sense.
This terminology is inspired by Willem's behavioral approach to control theory \cite{Willems.Polderman:2013a}, which
describes a dynamical system as a subset $B \subseteq W^T$ of lawful
trajectories parameterized by time $T$ in some value space $W$. The set $B$ is
the behavior type of this system, where a behavior of the system is simply a
lawful trajectory. Many of our examples follow this general form.

\begin{ex}
We will present a few running examples of systems considered in terms of their behavior types. Let's introduce them now.

\begin{itemize}
    \item Consider a bicycle. The bicycle pedals might be moving at some speed $p$, and the bicycle wheels might be moving at some speed $w$, both real numbers. If the pedal is pushing at a certain speed, then the wheels are moving at a least a constant multiple of that speed. Therefore, we will take the behavior type of our bicycle to be
    $$B_{\type{Bicycle}} := \{(p, w) \in \Rb \times \Rb \mid w \geq rp \}$$
    for some fixed ratio $r \in \Rb$.
    
    \item Consider a glass of water placed in a room of temperature $R$. The glass of water has temperature $T_t \in \Rb$ for every time $t \in \Nb$. By Newton's principle, the temperature of the water satisfies the following simple recurrence relation:
    $$T_{t + 1} = T_t + k(R - T_t).$$
    Therefore, the behavior type of this glass of water is
    $$B_{\type{Water}} := \{ T \in \Nb \to \Rb \mid T_{t + 1} = T_t + k(R - T_t)\}.$$
    
    \item Consider an ecosystem consisting of foxes and rabbits. At any given time $t \in \Nb$, there are $f_t$ foxes and $r_t$ rabbits, where we mask our uncertainty about the precise population by allowing these to be arbitrary real values, rather than integer values. The population of the species at time $t + 1$ is determined by its population at time $t$, according to the relation  $R_t(f, r)$ given by the following standard recurrences:
    \begin{align*}
        f_{t + 1} &= (1 - d_f)f_t + c_f r_t f_t, \quad\text{and} \\
        r_{t + 1} &= (1 + b_r)r_t - c_r r_t f_t.
    \end{align*}
    This is known as the Lotka--Volterra predator--prey model, but we could use any model. Here $d_f$ is the death rate of the foxes, $b_r$ is the birth rate of rabbits, and $c_f$ and $c_r$ are rates at which the consumption of rabbits by foxes affect their respective populations.   From here, we take the behavior type of this ecosystem to be
    $$B_{\type{Eco}} := \{(f, r) : \Nb \to \Rb \times \Rb \mid \forall t.\, R_t(f, r)\}.$$
    
    \item In algebraic geometry, we may think of the set of polynomial functions on a space $S$ as the possible behaviors of $S$. Thinking this way, the polynomial ring $\Rb[x, y]$ is the behavior type of the affine plane $\Rb \times \Rb$:
    $$B_{\Rb^2} := \Rb[x, y].$$
\end{itemize}
\end{ex}

\subsection{Parts as quotients of behavior type}
If we know a whole system $S$ (say, my body) is behaving like $b$, then we also know how any part $P$ of $S$ (say, my hand) is behaving: we just look at what $P$ is doing while $S$ does $b$. In other words, there should be a \emph{restriction} function, which we denote $\rest{}{P}\colon B_S \to B_P$, from the behavior type of the whole system to the behavior type of the part. 

Moreover, every behavior of a part $P$ will arise from \emph{some} behavior of the whole system: how could a part of the system do something if the system as a whole had no behaviors in which $P$ was doing that thing? Remember, we are considering the part $P$ \emph{as a part of the system $S$}, not on its own: my hand, not a severed hand. If we sever $P$ from the system $S$, it may be able to behave in ways that have no extension to $S$. But as a part of the system $S$, every behavior of the $P$ must be restricted from a behavior of $S$. We will give examples below, but first a definition.

\begin{defn}\label{def.part}
The behavior type of a \emph{part} of a system $S$ is a surjection
$\rest{}{P}\colon B_S \to B_P$ out of $B_S$ which we call the ``restriction from
$S$ to $P$''. We define the category of parts of $B_S$ to have as objects the parts of $S$ and as morphisms the commuting triangles
        \begin{center}
            \begin{tikzcd}[ampersand replacement=\&]
             \& B_S \arrow[ld, "|_P"', two heads] \arrow[rd, "|_Q", two heads] \&  \\
            B_P \arrow[rr, "|_Q"',two heads] \&  \& B_Q
            \end{tikzcd}
        \end{center}
Note that if such a map $B_P \to B_Q$ exists, then it will be unique and a
surjection. If there is such a map, we write $P \geq Q$ and say that $Q$ is a
part of $P$. This gives a preorder on parts, which we refer to as the
\emph{lattice of parts} of $S$.\footnote{We will see in Section
  \ref{subsec:compatibility.and.the.lattice.of.parts} that it is indeed a
  lattice.}
\end{defn}

For example, suppose that a system $S$ is divided into a \emph{plant} $P$ and a
\emph{controller} $C$. For example, $B_S$ might be $\{\alpha : \Rb \to \Rb^{p + c}
\mid \La(\alpha)\}$ a set of $p + c$ real variables satisfying a dynamical law $f$,
the first $p$ of which concern the plant $P$ and the last $c$ of which concern
the controller $C$. Then $B_P$ would be $\{\rho : \Rb \to \Rb^{p} \mid \exists
\gamma : \Rb \to \Rb^{c}.\, \La(\rho, \gamma)\}$ of $p$ real variables for which
there is some extension of $c$ variables (the behavior of the controller) which
is valid according to the dynamical law. The projection map $\Rb^{p + c} \to
\Rb^p$ gives a surjection from $B_S \twoheadleftarrow B_P$, witnessing that the
plant is a \emph{part} of the whole system.

In practice, we may have certain parts of $S$ in mind, and so we may consider a sublattice of that defined in \cref{def.part}. 

\begin{rmk}
\Cref{def.part} may look a little backwards. Usually a ``part'' is a subset; here we have defined a part to be a \emph{quotient}. What we have defined to be a part is often called a \emph{partition} (of $B_S$). 

What is happening here is a well-known ``space/function'' duality: we are not considering the system $S$ as some sort of object in space, but rather its type of behaviors $B_S$. Often, the behaviors $B_S$ of a system $S$ may be realized as functions on some sort of space $S$; this gives us a contravariance in $S$, which we see in the definition of part $Q$ is part of $P$ if there is a map ``going the other way,'' $B_P\to B_Q$. For comparison, consider the situation in algebraic geometry where one has a contravariant equivalence between the categories of algebraic varieties and the category of reduced finitely presented algebras; an algebra consists of polynomial functions on a variety, which tells us the ``possible behaviors'' of this variety, relative to the base field.
\end{rmk}

\begin{ex}
Here are some examples of parts.
\begin{itemize}
    \item The two parts of the bicycle under consideration are the pedal and the wheel. Explicitly, the behavior types of these parts of the bicycle are the types of all possible behaviors which arise as some behavior of the whole bicycle:
    \begin{align*}
        B_{\type{Pedal}} &:= \{ p \in \Rb \mid \exists w. (p, w) \in B_{\type{Bicycle}}\},\\
        B_{\type{Wheel}} &:= \{ w \in \Rb \mid \exists p. (p, w) \in B_{\type{Bicycle}}\}.
    \end{align*}
    In this case, every real number is a possible speed of the pedal, and every real number a possible speed of the wheel.
    
    \item In the system $\type{Water}$ of the cup of water sitting in the room, there is just one thing we are considering the behavior of: the cup. But, we can see this behavior at many different times. For every time $t \in \Nb$, we get a part $\type{Water}_t$ of the cup at time $t$ with behaviors
    $$B_{\type{Water}_t} := \{ x \in \Rb \mid \exists T \in B_{\type{Water}}.\, T_t = x\}.$$
    In fact, for any set $D \subseteq \Nb$ of times, we get the behavior type of the cup during $D$:
    $$B_{\type{Water}_D} := \{ x \in D \to \Rb \mid \exists T \in B_{\type{Water}}.\, \forall d \in D.\, T_d = x_d\}.$$
    
    \item The ecosystem consisting of foxes and rabbits is more complicated than the cup, but the principle is the same. We can consider the system at different times, and restrict our attention to just foxes or rabbits as we please. In particular, we let $\type{Fox}_t$ be the system of foxes at time $t$, and $\type{Rabbit}_t$ be the system of rabbits at time $t$. These have behavior types
    \begin{align*}
        B_{\type{Fox}_t} &:= \{ x \in \Rb \mid \exists (f, r) \in B_{\type{Eco}}.\, f_t = x\},\\
        B_{\type{Rabbit}_t} &:= \{ x \in \Rb \mid \exists (f, r) \in B_{\type{Eco}}.\, r_t = x\}.
    \end{align*}
    
    \item In algebraic geometry, the parts of the plane $\Rb^2$ under consideration are the algebraic subsets, i.e.\ the subsets carved out by algebraic equations. If $f \in \Rb[x, y]$ is a polynomial, then $\Rb[x, y]/(f)$ is the algebra of polynomial functions where $f$ is $0$. This is the algebra of polynomial functions on the subspace $\{(x, y) \in \Rb^2 \mid f(x, y) = 0\}$ of the plane. In the case that $f = x^2 + y^2 - 1$, then this subspace is the circle, and so we see that the behavior type of the circle is the quotient
    $$B_{\type{Circle}} := \Rb[x, y]/(x^2 + y^2 - 1).$$
    
\end{itemize}
\end{ex}

A surjection $\rest{}{P}\colon B_S\to B_P$ out of a set may equally be presented by its kernel pair, the equivalence relation on $B_S$ where $b\sim_P b'$ iff $\rest{b}{P}=\rest{b'}{P}$. We may call this relation \emph{observational equivalence}; the behaviors $b,b'$ with $b\sim_Pb'$ are \emph{observationally equivalent} relative to $P$. This is clearest when thinking of $P$ as some measuring device in a larger system; two behaviors of the whole system are observationally equivalent relative to our measuring device when it measures them to be the same. Two behaviors of my body are hand-equivalent if they are indistinguishable by looking at my hand; and two times are clock-equivalent when they read the same on the clock face.

There is essentially (i.e.\ up to isomorphism) no distinciton between a quotient of $B_S$ and an equivalence relation on $B_S$. Thus we are defining parts of $S$ to be equivalence relations on $S$-behaviors. This seems to be a novel approach to mereology, though we cannot claim to know the literature well enough to be sure.

\subsection{Compatibility}

Now we turn our attention to how behaviors of various parts of the system relate to one another. The most basic relation between behaviors of two parts is that of being simultaneously realizable by a behavior of the whole system. We call this relation \emph{compatibility}.

\begin{defn}\label{defn:compatible}
If $P$ and $Q$ are parts of $S$, then we say behaviors $a \in B_P$ and $b \in B_Q$ are \emph{compatible}, denoted $\mathfrak{c}(a, b)$, if there is a behavior $s \in B_S$ of the whole system that restricts to both $a$ and $b$, i.e.\
$$\mathfrak{c}(a, b) :\equiv \exists s \in S.\, a = \rest{s}{P}\, \wedge\, \rest{s}{Q} = b.$$

Generally, if $a_i \in P_i$ is some family of behaviors indexed by a set $I$, then this family is said to be \emph{compatible} if there is an $s \in S$ such that $\rest{s}{P_i} = a_i$ for all $i\in I$.
\end{defn}

In other words, two behaviors (one on each of two parts) are compatible if there is a behavior of the whole system that restricts to both of them. 

\begin{ex}
Examples of compatible behaviors are easily obtained by restricting a single system behavior to two parts.
\begin{itemize}
\item In the bicycle example, we see that a speed $p$ of the pedal is compatible with a speed $w$ of the wheel if and only if $w \geq rp$:
$$\mathfrak{c}(p, w) = w \geq rp.$$

\item In the cup of water example, a temperature $T^0 \in B_{\type{Water}_t}$ at time $t$ is compatible with a temperature $T^1 \in B_{\type{Water}_{t'}}$ at a later time $t'$ are compatible if and only if $T^1$ follows from $T^0$ via the recurrence relation. In particular, if $t' = t + 1$, then
$$\mathfrak{c}(T^0, T^1) = (T^1 = T^0 + k(R - T^0)).$$

\item In the ecosystem example, we have a number a different comparisons to choose from. A fox population $f^0 \in B_{\type{Fox}_t}$ at time $t$ is compatible with $f^1 \in B_{\type{Fox}_{t + 1}}$ at time $t + 1$ if and only if there is simultaneous rabbit population $r^0$ so that $f^1 = (1 - d_f)f^0 + c_f r^0 f^0$. 

Two simultaneous fox and rabbit populations are compatible if and only if there is some history of the ecosystem which achieves those population at that time. In particular, any two populations of foxes and rabbits at time $0$ are compatible.

\item In the algebraic geometry example, a part is an algebraic subset of the plane and its behaviors are the polynomial functions defined there. Two behaviors are compatible iff there is a function on the plane to which both simultaneously extend. For example, if $p(x)$ is a behavior on the $x$-axis and $q(y)$ is a behavior on the $y$-axis, and we have $p(0)=q(0)$, then writing $a\coloneqq p(0)\in\Rb$,  the behavior $p(x)+q(y)-a$ on the whole plane  restricts to each, and thus $p$ and $q$ are compatible. This is obviously a necessary condition: if $p(0)\neq q(0)$ then no global function can restrict to both.
\end{itemize}
\end{ex}

\subsection{Compatibility and the lattice of parts}\label{subsec:compatibility.and.the.lattice.of.parts}
We can express some of the parts-lattice operations in terms of the compatibility relation.

\begin{prop}
The meet $P \cap Q$ of parts $P$ and $Q$ of $S$ has behavior type given by the following pushout.
\[
\begin{tikzcd}
 & B_S \arrow[ld, two heads] \arrow[rd, two heads] &  \\
B_P \arrow[rd, two heads] &  & B_Q \arrow[ld, two heads] \\
 & B_{P \cap Q} \ar[uu, phantom, very near start, "\rotatebox{45}{$\urcorner$}"]& 
\end{tikzcd}
\]
In other words, a behavior of $P \cap Q$ is either a behavior of $P$ or a behavior of $Q$, where these are considered equal if they are compatible.
$$B_{P \cap Q} \cong \frac{B_P + B_Q}{\mathfrak{c}}.$$

Here, $B_P + B_Q$ is the disjoint union of these two sets, and we are
quotienting out by the smallest equivalence relation for which $p \sim q$
whenever $\mathfrak{c}(p, q)$.

Dually, the join $P \cup Q$ has behaviors given by the image factorization of the induced map $B_S \to B_P \times B_Q$.
\[
\begin{tikzcd}
 & B_S \arrow[ld, two heads] \arrow[rd, two heads]\ar[d, two heads] &  \\
B_P & B_{P\cup Q}\ar[l, two heads]\ar[r, two heads]\ar[d, tail] & B_Q  \\
 & B_P\times B_Q\ar[ul, two heads]\ar[ur, two heads] 
\end{tikzcd}
\]
In other words, a behavior of $P \cup Q$ is a pair of compatible behaviors from $P$ and from $Q$.
$$B_{P \cup Q} \cong \{(a, b) \in B_P \times B_Q | \mathfrak{c}(a, b)\}.$$

Furthermore, the largest part $\top$ is $S$, and the smallest part $\bot$ is
empty if $S$ is empty and a singleton otherwise.
\end{prop}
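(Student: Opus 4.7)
The plan is to verify each of the four lattice-theoretic characterizations directly by computing the relevant universal property in $\Set$. A part of $S$ is the same as an equivalence relation on $B_S$ (via the kernel-pair correspondence), so we are really working in the (well-known complete) lattice of equivalence relations on $B_S$, with its order reversed; the concrete descriptions in the proposition translate these meets and joins back to the surjection side.

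For the meet, I would define $B_{P\cap Q}$ as the pushout $B_P +_{B_S} B_Q$ in $\Set$ and check the lattice-theoretic universal property. The induced maps $B_P\to B_{P\cap Q}$ and $B_Q\to B_{P\cap Q}$ are surjective because the pushout of a surjection along any map is again a surjection in $\Set$; explicitly, if an element of the pushout comes from some $b\in B_Q$, use surjectivity of $B_S\twoheadrightarrow B_Q$ to find $s$ with $s|_Q = b$, and then $[s|_P] = [s|_Q] = [b]$. Hence the composite $B_S\twoheadrightarrow B_{P\cap Q}$ is surjective, making $P\cap Q$ a genuine part below both $P$ and $Q$. Any lower common part $R$ yields compatible surjections $B_P\twoheadrightarrow B_R$ and $B_Q\twoheadrightarrow B_R$ that form a cocone on the defining span, so the pushout supplies a unique map $B_{P\cap Q}\to B_R$; this map is automatically surjective because its pre-composition with $B_S\twoheadrightarrow B_{P\cap Q}$ is $B_S\twoheadrightarrow B_R$. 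The formula $(B_P+B_Q)/\mathfrak{c}$ is then just the standard construction of pushouts in $\Set$: the generating identifications $s|_P\sim s|_Q$ unpack precisely to the compatibility relation on cross-pairs.

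For the join, I would take $B_{P\cup Q}\subseteq B_P\times B_Q$ to be the image of the pairing $(|_P,|_Q)\colon B_S\to B_P\times B_Q$, which equals $\{(a,b)\mid \mathfrak{c}(a,b)\}$ by the very definition of compatibility. The projections restrict to surjections onto $B_P$ and $B_Q$: given $a\in B_P$, pick $s\in B_S$ with $s|_P=a$ and note that $(a,s|_Q)\in B_{P\cup Q}$. Hence $P\cup Q$ is a part with $P\cup Q\geq P$ and $P\cup Q\geq Q$. For the universal property, any common upper part $R$ induces a map $B_R\to B_P\times B_Q$ whose image coincides with that of $B_S\to B_P\times B_Q$ (because $B_S\twoheadrightarrow B_R$ is surjective), so this map factors through $B_{P\cup Q}$ as a surjection $B_R\twoheadrightarrow B_{P\cup Q}$, and the diagonal-fill-in property of the (epi, mono) factorization in $\Set$ forces all required triangles to commute.

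The top is immediate: $S$ itself is a part via $\id\colon B_S\to B_S$, and every other part $P$ comes by definition with a surjection $B_S\twoheadrightarrow B_P$. For the bottom, when $B_S$ is empty every behavior type is empty and $B_\bot=\emptyset$ is the unique part; when $B_S$ is nonempty every $B_P$ is nonempty, and the singleton $\{*\}$ receives a unique (hence surjective) map from every part. I expect the main subtlety to lie in the join: one has to confirm that the surjection $B_R\twoheadrightarrow B_{P\cup Q}$ really commutes with the restrictions $|_P$ and $|_Q$ that encode morphisms in the parts preorder, and this compatibility falls out of the uniqueness of image factorization rather than from any separate verification.
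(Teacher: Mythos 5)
Your proposal is correct and follows essentially the same route as the paper: the meet is verified directly from the pushout's universal property, and the join is handled by the image factorization of $B_S \to B_P \times B_Q$ together with the observation that surjectivity of $B_S \twoheadrightarrow B_R$ forces the image of any common upper bound $B_R$ to coincide with the set of compatible pairs (the paper phrases this as an epi--mono orthogonality lift, but the element-level argument is identical). You simply supply more detail on the meet, top, and bottom, which the paper dismisses as straightforward.
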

\begin{proof}
It is straightforward to show that the universal property of the meet and that of the pushout are equivalent, and similarly for the top element $\top$. The other two claims (joins and bottom elements) are proved by an orthogonality argument; we focus on the join.

If $B_S\twoheadrightarrow B_R$ is a part with $B_R\geq B_P$ and $B_R\geq B_Q$, we need to show $B_R\geq B_{P\cup Q}$ as defined above. But under these hypotheses there exists a solid-arrow a square
\[
\begin{tikzcd}
    B_S\ar[r, two heads]\ar[d, two heads]&
    B_{P\cup Q}\ar[d, tail]\\
    B_R\ar[r]\ar[ur, dotted]&
    B_P\times B_Q
\end{tikzcd}
\]
and there exists a dotted lift as shown by the following argument.\footnote{Or,
  one may note that epis are orthogonal to monos in a topos.} Suppose that
$r \in B_R$ and let $s \in B_S$ be an element such that $\rest{s}{R} = r$. Then
by commutativity of the diagram, $(\rest{r}{P}, \rest{r}{Q}) = (\rest{s}{P},
\rest{s}{Q})$ so that $\rest{r}{Q}$ and $\rest{r}{P}$ are compatible (being
restrictions of $s$). So $(\rest{r}{P},\rest{r}{Q}) \in B_{P \cup Q}$, giving
the dotted lift above.
\end{proof}

\begin{defn}
A part $P$ is \emph{strongly disjoint} from a part $Q$ if every behavior of $P$
is compatible with every behavior of $Q$. The two parts $P$ and $Q$ are
\emph{disjoint} if their intersection $P \cap Q$ is the minimal part. Strongly
disjoint parts are disjoint:
$$\forall a \in B_P.\, \forall b \in B_Q.\, \mathfrak{c}(a, b) \quad\Rightarrow\quad B_{P \cap Q} = \bot$$
\end{defn}

In general, we will be more interested in joins than in meets because joins are easier to work with (being subsets of a product, rather than quotients of a disjoint union by a non-transitive relation).

\begin{ex}
Let's consider some examples of joins and meets of parts.
\begin{itemize}
    \item In the example of the bicycle, note that we have
    $$B_{\type{Bicycle}} = B_{\type{Pedal} \cup \type{Wheel}},$$
    since a behavior of the bicycle was defined precisely to be a behavior of a pedal and a wheel satisfying a compatibility constraint.
    
    \item In the example of the cup of water, the behaviors $B_{\type{Cup}_D}$ over a duration $D \subset \Nb$ of times are the union of the behaviors $B_{\type{Cup}_d}$ for each time $d \in D$:
    $$B_{\type{Cup}_D} = B_{\bigcup \type{Cup}_d}.$$
    
    \item Similarly, in the ecosystem example, the parts of the ecosystem at various times are the join of parts at particular times. More interestingly, recall that every behavior of $\type{Fox}_0$ (starting population of foxes) is compatible with every behavior of $\type{Rabbit}_0$ (starting population of rabbits). Therefore, 
    $$B_{\type{Fox}_0 \cap \type{Rabbit}_0} = \bot$$
    This witnesses the fact that the parts $\type{Fox}_0$ and $\type{Rabbit}_0$ do not at all mutually constrain each other, and so have no shared sub-parts.
    
    \item Consider two algebraic subsets $X$ and $Y$ of the plane and let $I$ and $J$ be the ideals of polynomials which vanish on $X$ and $Y$ respectively. Then we know that $B_X = \Rb[x, y]/I$ and $B_Y = \Rb[x, y]/J$. The join has behaviors 
    \begin{align*}
        B_{X \cup Y} &= \{(f, g) \in \Rb[x, y]/I \times \Rb[x, y]/J \mid \exists h \in \Rb[x, y].\, h + I = f + I \mbox{ and } h + J = g + J\} \\
            &= \{(h + I, h + J) \mid h \in \Rb[x, y]\}.
    \end{align*}
    We note that $B_{X \cup Y}$ is a ring with operations taken componentwise and that the restriction map $B_{\Rb^2} \to B_{X \cup Y}$ is a homomorphism. The kernel is the set of those $h \in \Rb[x, y]$ for which $h + I = I$ and $h + J = J$, i.e. precisely those $h \in I \cap J$. Therefore, 
    $$B_{X \cup Y} = \Rb[x, y]/(I \cap J),$$
    which represents the union in the usual geometric sense.  
    
\end{itemize}
\end{ex}

\subsection{Determination recovers the order of parts}
\begin{defn}\label{defn:determines}
If $P$ and $Q$ are parts of $S$, and $a \in B_P$ and $b \in B_Q$, then $a$ \emph{determines} $b$ if every behavior $s$ of the whole system $S$ which restricts to $a$ also restricts to $b$.
$$\mathfrak{d}(a, b) :\equiv \forall s \in B_S.\, \rest{s}{P} = a \Rightarrow \rest{s}{Q} = b.$$
We say that a part $P$ \emph{determines} a part $Q$ if every behavior $a \in B_P$, determines some behavior $b \in B_Q$. 
\end{defn}

This is a much stronger notion than compatibility, and we shall show in \cref{Things:lem:Determines.Mereology} that it can be used to recover the original order relation $\geq$ on parts.

\begin{lemma}
A behavior always determines uniquely: if $\mathfrak{d}(a, b)$ and $\mathfrak{d}(a, b')$, then $b = b'$.
\end{lemma}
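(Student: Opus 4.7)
The plan is very short because the lemma is essentially a direct consequence of the surjectivity built into Definition \ref{def.part}. Unpacking $\mathfrak{d}$, the hypothesis $\mathfrak{d}(a,b)$ says that every $s\in B_S$ with $\rest{s}{P}=a$ satisfies $\rest{s}{Q}=b$, and similarly $\mathfrak{d}(a,b')$ forces $\rest{s}{Q}=b'$ for every such $s$. So to conclude $b=b'$ it suffices to exhibit at least one witness $s\in B_S$ lying over $a$, and then read off the values of $\rest{s}{Q}$ in two ways.

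The existence of such a witness is exactly where the definition of ``part'' does the work: since $P$ is a part of $S$, the restriction $\rest{}{P}\colon B_S\to B_P$ is surjective by Definition \ref{def.part}, so there exists some $s\in B_S$ with $\rest{s}{P}=a$. Applying the two hypotheses to this $s$ gives $b=\rest{s}{Q}=b'$.

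The only conceivable obstacle is that the argument uses inhabitedness of the fiber $\rest{}{P}^{-1}(a)$, which relies on surjectivity. It is worth noting explicitly in the write-up that this is precisely the reason we insisted restriction maps be surjective: without it, an element $a\in B_P$ with empty preimage would make $\mathfrak{d}(a,-)$ vacuously true of every $b\in B_Q$, and uniqueness would fail. So the one-sentence proof also serves as a small piece of motivation for the surjectivity condition in the definition of a part.
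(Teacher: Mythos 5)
Your proof is correct and follows exactly the paper's argument: use surjectivity of $\rest{}{P}$ to obtain a witness $s$ over $a$, then apply both determination hypotheses to that $s$ to conclude $b = \rest{s}{Q} = b'$. Your added remark that surjectivity is precisely what blocks vacuous determination is a nice observation, but the core argument is the same.
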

\begin{proof}
We know there is some $s \in S$ which restricts to $a$. Since $a$ determines $b$ and $b'$, $s$ restricts to both $b$ and $b'$; but then $b = b'$.
\end{proof}

\begin{prop}\label{Things:lem:Determines.Mereology}
For parts $P$ and $Q$ of $S$, the following are equivalent:
\begin{enumerate}
    \item $Q$ is a part of $P$, i.e.\ there is a surjection $B_P\twoheadrightarrow
      B_Q$ under $B_S$.
    \item $P$ $\mathfrak{c}$-determines $Q$, in the sense that for every $a \in B_P$ there is a unique $b \in B_Q$ such that $a$ is compatible with $b$.  In other words, $\forall a \in B_P.\, \exists! b \in B_Q.\, \mathfrak{c}(a, b)$.
    \item $P$ $\mathfrak{d}$-determines $Q$, in the sense that for every $a \in B_P$ there is a $b \in B_Q$ such that $a$ determines $b$. In other words, $\forall a \in B_P.\, \exists b \in B_Q.\, \mathfrak{d}(a, b)$.
    \item For all $a \in B_P$ and $b \in B_Q$, if $a$ is compatible with $b$, then $a$ determines $b$.
\end{enumerate}
\end{prop}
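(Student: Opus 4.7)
The plan is to prove the cycle $(1) \Rightarrow (2) \Rightarrow (3) \Rightarrow (4) \Rightarrow (1)$, using throughout the surjectivity of the restriction maps $\rest{}{P}\colon B_S \twoheadrightarrow B_P$ and $\rest{}{Q}\colon B_S \twoheadrightarrow B_Q$.

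For $(1) \Rightarrow (2)$, I would denote by $f\colon B_P \twoheadrightarrow B_Q$ the given surjection satisfying $\rest{}{Q} = f \circ \rest{}{P}$. Given $a \in B_P$, set $b \coloneqq f(a)$; surjectivity of $\rest{}{P}$ produces an $s \in B_S$ with $\rest{s}{P} = a$, and then $\rest{s}{Q} = f(a) = b$, giving compatibility. Uniqueness follows because any witness $s'$ of $\mathfrak{c}(a, b')$ satisfies $b' = \rest{s'}{Q} = f(\rest{s'}{P}) = f(a) = b$.

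For $(2) \Rightarrow (3)$, take $a \in B_P$ and let $b$ be the unique element with $\mathfrak{c}(a, b)$. Any $s \in B_S$ with $\rest{s}{P} = a$ realizes $\mathfrak{c}(a, \rest{s}{Q})$, so by uniqueness $\rest{s}{Q} = b$; thus $\mathfrak{d}(a, b)$. For $(3) \Rightarrow (4)$, pick $b^* \in B_Q$ with $\mathfrak{d}(a, b^*)$, and suppose $\mathfrak{c}(a, b)$ holds via a witness $s$. Then $\rest{s}{P} = a$ forces $\rest{s}{Q} = b^*$, but $\rest{s}{Q} = b$ by choice of $s$, so $b = b^*$ and therefore $\mathfrak{d}(a, b)$ holds.

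The interesting step is $(4) \Rightarrow (1)$: define $f\colon B_P \to B_Q$ by choosing, for each $a \in B_P$, some $s \in B_S$ with $\rest{s}{P} = a$ (available by surjectivity) and setting $f(a) \coloneqq \rest{s}{Q}$. The main obstacle is well-definedness: if $s'$ is another preimage of $a$, then $s'$ witnesses $\mathfrak{c}(a, \rest{s'}{Q})$, which by (4) upgrades to $\mathfrak{d}(a, \rest{s'}{Q})$; applying this determination to $s$ forces $\rest{s}{Q} = \rest{s'}{Q}$. By construction $f \circ \rest{}{P} = \rest{}{Q}$, and surjectivity of $f$ is inherited from surjectivity of $\rest{}{Q}$. (In a general topos one would replace the element-chasing with an orthogonality/image-factorization argument, as in the previous proposition.)
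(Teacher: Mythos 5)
Your proof is correct and takes essentially the same element-chasing approach as the paper; the only difference is organizational, in that you close a single cycle $1\Rightarrow 2\Rightarrow 3\Rightarrow 4\Rightarrow 1$ (descending $\rest{}{Q}$ along $\rest{}{P}$ using well-definedness from item 4), whereas the paper proves $1\Rightarrow 2\Rightarrow 3\Rightarrow 1$ and treats $3\Leftrightarrow 4$ separately, with the induced map $B_P\twoheadrightarrow B_Q$ constructed in the same way in substance. Your well-definedness check also makes clear that no genuine choice is needed, since the value $\rest{s}{Q}$ is independent of the chosen preimage $s$ of $a$.
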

\begin{proof}
This claim follows from the following implications.

($1 \Rightarrow 2$): Recall that if $Q$ is a part of $P$, then the restriction of $s \in B_S$ to $B_P$ and then $B_Q$ is equal to the restriction $f \in B_P \twoheadrightarrow B_Q$ straight to $B_Q$. This immediately implies $a \in B_P$ is compatible with $f(a)$. For uniqueness, notice that if $b \in B_Q$ is also compatible with $a$, then some $s \in B_S$ restricts to both $a$ and $b$. But $s$ restricts also to $f(a)$, so $b = f(a)$.
    
($2 \Rightarrow 3$): Suppose $P$ $\mathfrak{c}$-determines $Q$. Then for $a \in B_P$, let $b \in B_Q$ be the guaranteed unique compatible behavior of $Q$. If $s \in B_S$ restricts to $a$, then $a$ is compatible with $\rest{s}{Q}$ and so $\rest{s}{Q} = b$; therefore, $a$ $\mathfrak{d}$-determines $b$.
    
($3 \Rightarrow 1$): By the lemma, $a$ determines $b$ uniquely; thus we get a function $f : B_P \to B_Q$ sending $a$ to $f(a) := b$. Now, for any $b \in B_Q$, there is some $s \in B_S$ restricting to it. By hypothesis, $\rest{s}{P}$ determines some $f(a) \in B_Q$; but then $\rest{s}{Q} = f(a)$ so that $b = f(a)$ and $f$ is epi. Finally, if $s \in B_S$, then $f(\rest{s}{P}) = \rest{s}{Q}$, so that $Q$ is part of $P$.
    
($3 \Leftrightarrow 4$): Since $P$ and $Q$ are both parts of $S$, for every $a
\in B_P$, there is a $b \in B_Q$ compatible with it. Thus $4 \Rightarrow 3$. On the other hand, assuming $3$ and that $a$ is compatible with $b$, we know by $2$ that $b$ is the unique behavior of $Q$ compatible with $a$, so that $a$ determines $b$. \qedhere
\end{proof}

\begin{ex}
Here are some examples of one part determining another; equivalently, these are examples of subparts of parts.
\begin{itemize}
    \item In the example of the bicycle, neither the wheel nor the pedal determines the other.
    \item In the example of the cup of water, each temperature at time $t$ determines the temperature at time $t + 1$ by the recurrence relation. Therefore, we have $\type{Cup}_t \geq \type{Cup}_{t + 1}$ by Lemma \ref{Things:lem:Determines.Mereology}; in other words, a cup at time $t + 1$ is ``part'' of a cup at time $t$. This may seem odd, but remember that our notion of part is behavior-theoretic. In a deterministic world, the future is \emph{contained in}---part of---the present. This can be seen from the perspective of Laplace's demon: the future is ``present before its eyes''; it is part of the present.
    \item Since the ecosystem example is also a deterministic dynamical system, the analysis is the same as for the cup of water example.
    \item For the algebraic geometry example, the behavior of the unit circle determines the behavior of the part $\{(1,0)\}$, but it does not determine the behavior of the $x$-axis because the function $x^2+y^2$ and the constant function $1$ have the same behavior on the circle but different behaviors on the $x$-axis.
\end{itemize}
\end{ex}

\section{Constraints, Allowance, and Ensurance}

In this section, we introduce our new logical operators, $\allows{}{}$ and $\ensure{}{}$, and prove some basic properties about them. We shall see in \cref{subsec.compat_ensure} that these two operators pass constraints between parts. But first, what is a constraint?

\subsection{Constraints as predicates}
We will identify a constraint $\phi$ on a part $P$ with the predicate ``satisfies $\phi$'' on behaviors $B_P$ of $P$. In other words, we have the following definition.

Let $\Prop$ be the two element set $\{\true,\false\}$ of truth values. We think
of functions $\phi : X \to \Prop$ as predicates concerning the elements of $X$
--- applied to $x \in X$, $\phi$ gives a truth value $\phi(x)$ which says
whether or not $x$ satisfies the predicate $\phi$. 
\begin{defn}
A \emph{constraint} on a part $P$ is a map $\phi : B_P \to \Prop$. The type of constraints on $P$ is $\Prop^P$. We write
$$\phi \vdash \psi$$
to mean that $\phi$ \emph{entails} $\psi$, that is, if $\phi(b) = \true$, then $\psi(b) = \true$.
\end{defn}

For parts $P\geq Q$, we get an \emph{adjoint triple} that allows us to transform
constraints on $P$ to those on $Q$, and vice versa, given by the logical quantifiers:
    \begin{center}
        \begin{tikzcd}[ampersand replacement=\&]
        \Prop^{B_P} \arrow[rr, "\exists^P_Q"{name=exists}, bend left] \arrow[rr, "\forall^P_Q"'{name=forall}, bend right] \&  \& \Prop^{B_Q} \arrow[ll, "\Delta^Q_P" description, ""{name=delta}]
        \ar[from=exists, to=delta, phantom, "\Rightarrow"]
        \ar[from=delta, to=forall, phantom, "\Leftarrow"]
        \end{tikzcd}
    \end{center}
These functors are defined logically as follows:
\begin{align*}
    \exists^P_Q \phi(q) &:= \exists p \in B_P.\, \big((\rest{p}{Q} = q) \meet \phi(p)\big) \\
    \Delta^Q_P \psi(p) &:= \psi(\rest{p}{Q}) \\
    \forall^P_Q \phi(q) &:= \forall p \in B_P.\, \big((\rest{p}{Q} = q) \Rightarrow \phi(p)\big)
\end{align*}
The fact that they are \emph{adjoint} means that
\begin{align*}
    \exists^P_Q \phi \vdash \psi\quad &\iff \quad\phi \vdash \Delta^Q_P \psi \\
    \Delta^Q_P \psi \vdash \xi\quad &\iff\quad \psi \vdash \forall^P_Q \xi 
\end{align*}

We will write $\exists_P$, $\Delta^P$, and $\forall_P$ for $\exists^S_P$, $\Delta^P_S$ and $\forall^S_P$ respectively. As mentioned, these operations are functorial, meaning that $\exists^P_P(\phi)=\Delta^P_P(\phi)=\forall^P_P(\phi)=\phi$ and for $R \leq Q \leq P$,
\begin{align*}
    \exists^Q_R  \exists^P_Q &= \exists^P_R, \\
    \Delta^R_Q \Delta^Q_P &= \Delta^R_P, \\
    \forall^Q_R \forall^P_Q  &= \forall^P_R.
\end{align*}

\begin{lemma}\label{Thing:lem:Global.Modality.Props}
Recall that for part $P$ of system $S$ we write $s \sim_P s'$ for the relation $\rest{s}{P} = \rest{s'}{P}$ on $B_S$. Then for any predicate $\phi$ on $S$ we have:
\begin{enumerate}
    \item $\Delta^P \exists_P \phi(s) = \exists s'.\, (s\sim_P s') \wedge \phi(s')$
    \item $\Delta^P \forall_P \phi(s) = \forall s'.\, (s \sim_P s') \Rightarrow \phi(s')$
    \item $\phi \vdash \Delta^P \exists_P \phi$\quad and \quad $\Delta^P \forall_P \phi \vdash \phi$
\end{enumerate}
\end{lemma}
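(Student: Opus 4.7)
My plan is to prove all three statements by unfolding the definitions of $\Delta^P$, $\exists_P$, $\forall_P$ given just above the lemma, and for part~3 to invoke the adjunctions to get a clean two-line argument (or, alternatively, to deduce it from parts~1 and 2 together with reflexivity of $\sim_P$).

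For part~1, I will simply compute: by definition $\Delta^P \exists_P \phi(s) = (\exists_P \phi)(\rest{s}{P})$, and then unfolding $\exists_P = \exists^S_P$ gives $\exists s' \in B_S.\,(\rest{s'}{P} = \rest{s}{P}) \meet \phi(s')$. Since $\rest{s'}{P} = \rest{s}{P}$ is precisely the definition of $s \sim_P s'$, this is the claimed formula. Part~2 is identical with $\forall$ in place of $\exists$ and implication in place of conjunction; the only thing to note is that quantification ranges over $B_S$, not over $B_P$.

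For part~3, the cleanest route is via the stated adjunctions. Since $\exists_P \dashv \Delta^P$, the inequality $\exists_P \phi \vdash \exists_P \phi$ transposes to $\phi \vdash \Delta^P \exists_P \phi$; this is just the unit of the adjunction. Dually, $\Delta^P \dashv \forall_P$ applied to $\forall_P \phi \vdash \forall_P \phi$ gives $\Delta^P \forall_P \phi \vdash \phi$ as the counit. If a reader prefers an elementary derivation, one can instead use parts~1 and 2: if $\phi(s)$ holds then the witness $s' = s$ shows $\exists s'.\,(s\sim_P s')\wedge\phi(s')$, using reflexivity $s\sim_P s$; conversely, if $\forall s'.\,(s\sim_P s')\Rightarrow\phi(s')$ holds, instantiate at $s'=s$ to obtain $\phi(s)$.

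I do not expect any real obstacle here: the statement is essentially a restatement of the definitions, with part~3 being the standard unit/counit of the Lawvere adjunctions. The only mild subtlety is remembering that $\exists_P$ and $\forall_P$ are shorthand for $\exists^S_P$ and $\forall^S_P$, so their quantifiers range over $B_S$ and the equation $\rest{s'}{P}=\rest{s}{P}$ genuinely is $s\sim_P s'$ as defined on $B_S$.
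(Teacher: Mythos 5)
Your proof is correct: the paper gives no explicit proof of this lemma, treating it as immediate from the definitions of $\exists_P$, $\Delta^P$, $\forall_P$, and your unfolding (plus the unit/counit of the adjunctions, or equivalently reflexivity of $\sim_P$, for part~3) is exactly the intended argument.
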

Thinking again of $P$ as a way to observe behaviors,
$\Delta^P \exists_P \phi$ is the set of system behaviors $s$ that our observer says plausibly satisfy $\phi$: there is something $P$-equivalent to $s$ that satisfies $\phi$. And $\Delta^P \forall_P \phi$ is the set of system behaviors that our observer can guarantee satisfy $\phi$.

\subsection{The allowance and ensurance operators}\label{subsec.compat_ensure}
Now we turn to the question of how constraints on the behavior of some part of the system constrain the behavior of other parts. We discuss two ways to pass constraints between parts.

     \begin{defn}\label{defn:intermodalities}
        A constraint $\phi$ on a part $P$ induces a constraint on a part $Q$ (of
        the same system $S$) in two universal ways:
        \begin{itemize}
            \item ``Allows $\phi$'':\quad $\allows{P}{Q}\phi := \exists_Q \Delta^P\phi$
                \begin{align*}
               \allows{P}{Q} \phi(q) &=  \exists s \in B_S.\, (\rest{s}{Q} = q) \wedge \phi(\rest{s}{P})\\
               &= \exists p \in B_P.\, \mathfrak{c}(p, q) \meet \phi(p).
                \end{align*}
            \item ``Ensures $\phi$'':\quad $\ensure{P}{Q}\phi := \forall_Q  \Delta^P\phi$
                \begin{align*}
                \ensure{P}{Q} \phi(q) &= \forall s \in B_S.\, (\rest{s}{Q} = q) \Rightarrow \phi(\rest{s}{P})\\
                &= \forall p \in B_p.\, \mathfrak{c}(p, q) \Rightarrow \phi(p).
                \end{align*}
        \end{itemize} 
    \end{defn}
    
    A behavior $q$ of $Q$ allows a constraint $\phi$ on $P$ if $Q$ can be doing $q$ while $P$ is satisfying $\phi$; we write this as $\allows{P}{Q}\phi(q)$. A behavior $q$ of $Q$ ensures $\phi$ on $P$ if whenever $Q$ does $q$, $P$ \emph{must} satisfy $\phi$; we write this as $\ensure{P}{Q}\phi(q)$.
    
    These symbols are chosen due to their relation to the usual modalities of possibility ($\Diamond$) and necessity ($\Box$) \cite{kripke1963semantical}; a behavior $q$ allows $\phi$ if it is \emph{possible} that $P$ satisfies $\phi$ while $Q$ does $q$, and a behavior $q$ ensures $\phi$ if it is \emph{necessary} that $P$ satisfies $\phi$ while $Q$ does $q$. Indeed, in the case that the accessibility relation in the Kripke frame is an equivalence relation, we will be able to recover the usual possibility and necessity modalities from our allowance and ensurance operators (see \cref{subsec.modal}).
    
    Note that compatibility and determination appear as particular, pointwise
    cases of the allowance and ensurance operators. For any $p\in B_P$ and $q\in
    B_Q$, we have
    \begin{align*}
        \mathfrak{c}(p, q) &= \allows{P}{Q}(=p)(q) = \allows{Q}{P}(=q)(p) \\
        \mathfrak{d}(p, q) &= \ensure{Q}{P}(=q)(p)
    \end{align*}
    We write $(=p)$ for the map $B_P\to\Prop$ that sends $p'$ to $\true$ if and only if $p=p'$.
    
    \begin{ex}
    We return to our running examples to see our new operators in action.
    \begin{itemize}
        \item In the example of the bicycle with gear ratio of $r$, we can ask what behavior of the pedal is ensured by the wheels moving slower than $w=2$ mph. We have $\ensure{\type{Pedal}}{\type{Wheel}}(\leq 2)$ is the constraint $p\leq \frac{2}{r}$.
        
        \item If the cup of water has temperature $T^0\in \type{Water}_0$ at time $0$, then it cannot have a temperature further away from the ambient room temperature $R$ at a later time. Therefore, 
        $$|R - T^t| > |R - T^0| \vdash \neg \allows{\type{Water}_0}{\type{Water}_t}(=T^0)(T^t).$$
        
        \item Suppose that in the ecosystem example, one was given the goal of introducing a fox population at time $0$ in order to keep the rabbit population in check after a given deadline $d$. Let's say that being kept in check means being between two fixed bounds, $$r_t \mapsto \term{inCheck}(r_t) := k_1 < r_t < k_2$$
        so that $\term{inCheck} : B_{\type{Rabbit}_t} \to \Prop$ is a constraint on rabbits at time $t$. The constraint of being kept in check for all times after the deadline $d$ is the constraint
        $$r \mapsto \forall t \geq d.\, \term{inCheck}(r_t)$$
        on the join $\bigvee_{t \geq d} \type{Rabbit}_t$. The goal may then be expressed as finding a starting fox population $f_0$ which ensures that the rabbit population is kept in check at all times after the deadline:
        $$\ensure{\bigvee_{t \geq d} \type{Rabbit}_t}{\type{Fox}_0}(\forall t \geq d.\, \term{inCheck})(f_0).$$   
        \end{itemize}
 \end{ex}
 
\begin{ex}
We can see a higher-order ensurance in the ecosystem example. If there are any rabbits at time $0$, and if the rabbit population is bounded independent of time, then the rabbits must ensure that there are foxes, and that the foxes ensure there are rabbits:
    $$r_0 \geq 0 \meet r < k \vdash \ensure{F}{R} (f > 0 \meet \ensure{R}{F}(r > 0)).$$ 
If there are no foxes, then the rabbit population is unbounded, and if there are foxes, then there must be rabbits for them to eat. We see that this ecosystem model exhibits a rudimentary form of symbiosis; though the foxes eat the rabbits, they counter-intuitively must ensure that the rabbits do not go extinct, lest they themselves go extinct.
\end{ex}

    Assuming the law of excluded middle, our operators are inter-definable by conjugating with negation. 
    
    \begin{prop}\label{prop.deMorgan}
    Assuming Boolean logic, allowance and ensurance are de Morgan duals. That is, $\neg \allows{P}{Q} \neg = \ensure{P}{Q}$.
    \end{prop}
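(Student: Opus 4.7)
The plan is to unfold both sides using the explicit formulas from \cref{defn:intermodalities} and then reduce one to the other by classical de Morgan manipulations. Specifically, I would fix $q \in B_Q$ and compute $\neg\allows{P}{Q}(\neg\phi)(q)$ step by step.

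First I would substitute the definition to obtain
\[
\neg\allows{P}{Q}(\neg\phi)(q) \;=\; \neg\bigl(\exists p \in B_P.\, \mathfrak{c}(p,q) \wedge \neg\phi(p)\bigr).
\]
Next I would push the negation inside the existential using the de Morgan equivalence $\neg\exists p.\,\psi(p) \equiv \forall p.\,\neg\psi(p)$, which is valid in Boolean logic, to rewrite this as $\forall p \in B_P.\, \neg(\mathfrak{c}(p,q) \wedge \neg\phi(p))$. Then I would apply the Boolean tautology $\neg(A \wedge \neg B) \equiv (A \Rightarrow B)$ to the body, which yields $\forall p \in B_P.\, \mathfrak{c}(p,q) \Rightarrow \phi(p)$. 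This is precisely $\ensure{P}{Q}\phi(q)$ by the second clause of \cref{defn:intermodalities}.

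Since $q$ was arbitrary, the two predicates on $B_Q$ agree pointwise, establishing the equality of operators. There is essentially no obstacle here beyond noting that the proof genuinely requires classical logic at two points: the de Morgan law for $\neg\exists$, and the elimination of the double negation implicit in $\neg(A\wedge\neg B)\equiv A\Rightarrow B$. In a purely intuitionistic setting only the direction $\ensure{P}{Q}\phi \vdash \neg\allows{P}{Q}\neg\phi$ would survive, which is why the hypothesis of Boolean logic is stated explicitly in \cref{prop.deMorgan}.
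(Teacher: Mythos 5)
Your proof is correct and follows essentially the same route as the paper's: unfold the definition, push the negation through the existential, and classically simplify $\neg(\mathfrak{c}(p,q)\wedge\neg\phi(p))$ to an implication. One tiny quibble: the step $\neg\exists p.\,\psi(p)\equiv\forall p.\,\neg\psi(p)$ is actually intuitionistically valid, so both classical uses live in the single equivalence $\neg(A\wedge\neg B)\equiv(A\Rightarrow B)$ (the paper splits this into a de Morgan step and a double-negation elimination, which is where its ``excluded middle twice'' count comes from); your closing remark about the surviving intuitionistic direction is nevertheless correct.
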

    \begin{proof} The proof uses the law of excluded middle twice:
    \begin{align*}
        \neg \allows{P}{Q} \neg \phi(q) &= \neg \exists p.\, \mathfrak{c}(p, q) \meet \neg \phi(p) \\
            &= \forall p.\, \neg (\mathfrak{c}(p, q) \meet \neg \phi(p)) \\
            &= \forall p.\, \neg \mathfrak{c}(p, q) \join \neg\neg \phi(p) \\
            &= \forall p.\, \mathfrak{c}(p, q) \Rightarrow \phi(p). \qedhere
    \end{align*}
    \end{proof}
    Note that \cref{prop.deMorgan} does not generalize to arbitrary toposes, where the variation of the sets (in time or in space) means that one must reason constructively in general.
    
\subsection{Allowance and ensurance as adjoints}
We now develop the basic theory of the allowance and ensurance operators. First, we observe that allowance and ensurance are monotone and adjoint to each other.   
   
    \begin{prop} \label{Things:prop:compatible.and.ensures} Let $P$ and $Q$ be parts of the system $S$ and $\phi$ a constraint on $P$. Then:
    \begin{enumerate}
        \item If a constraint $\phi$ entails $\psi$, then allowing $\phi$ entails allowing $\psi$, and ensuring $\phi$ entails ensuring $\phi$. That is, $\allows{P}{Q}$ and $\ensure{P}{Q}$ are monotone.
        \item If $q$ ensures that $P$ does $\phi$, then $q$ allows $P$ doing $\phi$. That is, $\ensure{P}{Q}\phi \vdash \allows{P}{Q}\phi$.
        \item Allowing $\phi$ entails $\psi$ if and only if $\phi$ entails ensuring $\psi$. That is, $\allows{P}{Q}$ is left adjoint to $\ensure{Q}{P}$.
        \item $\allows{P}{Q}$ commutes with $\join$ and $\exists$, and $\ensure{P}{Q}$ commutes with $\meet$ and $\forall$.
        \item A constraint allows itself if and only if it is satisfied if and only if it ensures itself. In other words, $\allows{P}{P}\phi = \phi = \ensure{P}{P}\phi$.

    \end{enumerate}
    \end{prop}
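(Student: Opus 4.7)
The plan is to treat everything as consequences of the two definitional decompositions $\allows{P}{Q} = \exists_Q \circ \Delta^P$ and $\ensure{P}{Q} = \forall_Q \circ \Delta^P$, together with the adjoint triple $\exists_P \dashv \Delta^P \dashv \forall_P$. The key technical lever is that $\Delta^P$ is simultaneously a left and a right adjoint, so it preserves all limits and colimits; combined with the one-sided adjointness of $\exists_Q$ and $\forall_Q$, this will handle every clause with almost no direct computation.

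Parts (1) and (2) are quick. For (1), each of $\exists_Q$, $\Delta^P$, and $\forall_Q$ is monotone (either half of a Galois connection between posets is monotone), so any composition is monotone. For (2), I would unfold the formula for $\ensure{P}{Q}\phi(q)$: it asserts that every $s \in B_S$ with $\rest{s}{Q} = q$ satisfies $\phi(\rest{s}{P})$. Since $\rest{}{Q}$ is surjective, at least one such $s$ exists, and that $s$ immediately witnesses $\allows{P}{Q}\phi(q)$.

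Part (3) is a short adjunction chase, and part (4) follows from it formally. For (3) I would compute
\[
\allows{P}{Q}\phi \vdash \psi \iff \exists_Q \Delta^P \phi \vdash \psi \iff \Delta^P \phi \vdash \Delta^Q \psi \iff \phi \vdash \forall_P \Delta^Q \psi = \ensure{Q}{P}\psi,
\]
using $\exists_Q \dashv \Delta^Q$ and $\Delta^P \dashv \forall_P$. For (4), note that $\allows{P}{Q}$ is a composition of left adjoints ($\exists_Q$ is left adjoint to $\Delta^Q$, and $\Delta^P$ is left adjoint to $\forall_P$), so it preserves all colimits and in particular commutes with $\join$ and $\exists$; dually $\ensure{P}{Q} = \forall_Q \circ \Delta^P$ is a composition of right adjoints and preserves $\meet$ and $\forall$.

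For (5) I would compute directly from \cref{defn:intermodalities}: when $P = Q$, the compatibility relation $\mathfrak{c}(p, p')$ on $B_P \times B_P$ reduces to equality $p = p'$, since $\rest{}{P} : B_S \twoheadrightarrow B_P$ is a surjective function (any common lift $s$ forces $p = \rest{s}{P} = p'$, and surjectivity supplies such an $s$ whenever $p = p'$). Substituting into the pointwise formulas then gives $\allows{P}{P}\phi(p) = \exists p'.\,(p' = p) \meet \phi(p') = \phi(p)$, and symmetrically $\ensure{P}{P}\phi(p) = \phi(p)$. I do not expect any real obstacle; the one place to be careful is in (3), where the adjoint of $\allows{P}{Q}$ is $\ensure{Q}{P}$ with the indices \emph{swapped}, reflecting that $\Delta^P$ appears inside $\allows{P}{Q}$ but its right adjoint $\forall_P$ appears inside $\ensure{Q}{P}$.
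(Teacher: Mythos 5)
Your proposal is correct and follows essentially the same route as the paper: monotonicity and items (3)--(4) via the adjoint triple $\exists \dashv \Delta \dashv \forall$, and item (2) via surjectivity of $\rest{}{Q}$. The only (immaterial) difference is item (5), where the paper uses the adjunction $\allows{P}{P} \dashv \ensure{P}{P}$ to reduce to checking one operator and then invokes that $\rest{}{P}$ is epi, whereas you verify both directly by noting that $\mathfrak{c}$ restricted to $P$ and itself is just equality---the same surjectivity fact in slightly different packaging.
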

    \begin{proof}
    ~
    \begin{enumerate}
        \item As the composite of monotone maps, both maps are monotone.
        \item Let $\phi : B_P \to \Prop$ be a constraint on $P$. Suppose that $\ensure{P}{Q}\phi(q)$ for $q \in Q$, that is that for all $s \in B_S$, if $\rest{s}{Q} = q$, then $\phi(\rest{s}{P})$. Since $\rest{}{Q}$ is epi, there is an $s$ such that $\rest{s}{Q} = q$, and therefore $\phi(\rest{s}{P})$; so $\ensure{P}{Q}\phi(q) \vdash \allows{P}{Q}\phi(q)$.
        \item This follows from the (left, right) adjoint pairs $(\exists_Q, \Delta^Q)$ and $(\Delta^P, \forall_P)$ as follows:
        \begin{align*}
            \allows{P}{Q} \phi &\vdash \psi \\ 
            \exists_Q \Delta^P \phi &\vdash \psi \\
            \Delta^P \phi &\vdash \Delta^Q \psi \\ 
            \phi &\vdash \forall_P \Delta^Q \psi \\
            \phi &\vdash \ensure{Q}{P} \psi.
        \end{align*}
        \item Follows from the general properties of adjoints.
        \item As $\allows{P}{P}$ and $\ensure{P}{P}$ are adjoint, it suffices to show that just one is the identity. This follows from the fact that $P$ is a quotient of $S$. Suppose $\ensure{P}{P}\phi(p)$; that is, for all $s \in S$, $\rest{s}{P} = p$ implies $\phi(p)$. Since $\rest{}{P}$ is epi, there is some $s$ that restricts to $p$, and therefore $\phi(p)$. \qedhere
        
    \end{enumerate}
    \end{proof}
    
 To make the adjointness of $\allows{P}{Q}$ and $\ensure{Q}{P}$ more visceral, consider the following example: by putting my left hand on the wall, I can \emph{ensure} my right hand is within ten feet of the wall. This is the same as saying that if the behavior of my right hand \emph{allows} my left hand being on the wall, then my right hand is within 10 feet of the wall. That is, $L\vdash\ensure{Q}{P}R$ iff $\allows{P}{Q}L\vdash R$.
    
    The unit and counit of this adjunction are interesting as well. The unit
    says that whatever my left hand is doing, doing this ensures that my right
    hand allows doing it. The counit says that my right hand allowing my left hand ensuring that my right hand is doing something implies
    that my right hand is doing that thing. For example, if my right hand allows
    my left hand to ensure that it is within 10 feet of the wall, then my right hand must be within 10 feet of the wall.
    
\subsection{Interactions between three parts}
    Next we discuss how allowance and ensurance compose, something like a triangle inequality for these inter-modalities.
    \begin{prop}[Composition]
    Let $P$, $Q$, and $R$ be parts of system $S$. Then:
    \begin{enumerate}
        \item If a behavior of $R$ allows a constraint on $P$, then it allows a behavior of $Q$ allowing that constraint. That is, $\allows{P}{R} \vdash \allows{Q}
        {R}  \allows{P}{Q}$.
        \item If a behavior of $R$ ensures that a behavior of $Q$ ensures some constraint on $P$, then that behavior of $R$ ensures that constraint on $P$. That is, $\ensure{Q}{R}\ensure{P}{Q} \vdash \ensure{R}{P}$.
    \end{enumerate}
    \end{prop}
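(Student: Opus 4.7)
The plan is to argue both inequalities at the level of the adjoint triple $\exists_Q \dashv \Delta^Q \dashv \forall_Q$ directly from the definitions $\allows{P}{R} = \exists_R\Delta^P$ and $\ensure{P}{R} = \forall_R\Delta^P$ (Definition~\ref{defn:intermodalities}), rather than by unfolding compatibility. Both claims reduce, after peeling off the outer $\exists_R$ or $\forall_R$ and the innermost $\Delta^P$ (all of which are monotone), to a single invocation of the unit of $\exists_Q \dashv \Delta^Q$ or the counit of $\Delta^Q \dashv \forall_Q$ at the intermediate part $Q$.

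For part 1, expand
\[
\allows{P}{R}\phi \;=\; \exists_R\Delta^P\phi, \qquad \allows{Q}{R}\allows{P}{Q}\phi \;=\; \exists_R\,\Delta^Q\,\exists_Q\,\Delta^P\phi.
\]
The unit of $\exists_Q \dashv \Delta^Q$, recorded as $\psi \vdash \Delta^Q\exists_Q\psi$ in Lemma~\ref{Thing:lem:Global.Modality.Props}(3), specialized to $\psi := \Delta^P\phi$ yields $\Delta^P\phi \vdash \Delta^Q\exists_Q\Delta^P\phi$. Applying the monotone $\exists_R$ to both sides then gives $\allows{P}{R}\phi \vdash \allows{Q}{R}\allows{P}{Q}\phi$.

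Part 2 is formally dual; note first that the stated target $\ensure{R}{P}$ must be read as $\ensure{P}{R}$ for the types to match, since $\ensure{Q}{R}\ensure{P}{Q}$ sends $P$-constraints to $R$-constraints. Expanding $\ensure{P}{R}\phi = \forall_R\Delta^P\phi$ and $\ensure{Q}{R}\ensure{P}{Q}\phi = \forall_R\Delta^Q\forall_Q\Delta^P\phi$, the counit $\Delta^Q\forall_Q\psi \vdash \psi$ at $\psi := \Delta^P\phi$ gives $\Delta^Q\forall_Q\Delta^P\phi \vdash \Delta^P\phi$, and applying the monotone $\forall_R$ delivers $\ensure{Q}{R}\ensure{P}{Q}\phi \vdash \ensure{P}{R}\phi$.

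There is no real obstacle here beyond bookkeeping; the only trap is the tempting but false identity $\Delta^Q\exists_Q = \id$ or $\Delta^Q\forall_Q = \id$, which hold only when $Q = S$ (cf.\ Proposition~\ref{Things:prop:compatible.and.ensures}(5)). If desired, a parallel elementary proof of part 1 is available: given a witness $s \in B_S$ for $\allows{P}{R}\phi(r)$, the restrictions $q := \rest{s}{Q}$ and $p := \rest{s}{P}$ witness $\mathfrak{c}(q,r)$, $\mathfrak{c}(p,q)$, and $\phi(p)$, whence $\allows{Q}{R}\allows{P}{Q}\phi(r)$; the dual argument on universal extensions handles part 2.
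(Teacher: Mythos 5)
Your proof is correct and takes essentially the same route as the paper's: both expand the intermodalities as composites of the quantifier adjoints and apply the unit $\psi \vdash \Delta^Q\exists_Q\psi$ (resp.\ counit $\Delta^Q\forall_Q\psi \vdash \psi$) of \cref{Thing:lem:Global.Modality.Props} at $\psi = \Delta^P\phi$, then wrap with the monotone $\exists_R$ (resp.\ $\forall_R$). Your reading of the stated $\ensure{R}{P}$ as $\ensure{P}{R}$ is also exactly what the paper's own proof concludes, so that typographical correction is right.
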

    \begin{proof}
    This follows immediately from \cref{Thing:lem:Global.Modality.Props}, since 
    \[
    \allows{P}{R} = \exists_R \Delta^P \vdash \exists_R \Delta^Q \exists_Q \Delta^P = \allows{Q}{R}\allows{P}{Q}
    \]
    \[
    \ensure{Q}{R}\ensure{P}{Q} = \forall_R \Delta^Q \forall_Q \Delta^P \vdash \forall_R \Delta^P = \ensure{P}{R} \qedhere
    \]
    \end{proof}

\begin{prop}[Interaction of inter-modalities with meets and joins]
~
\begin{enumerate}
    \item $\Diamond^P_{Q \cap R}\phi = \exists q \in Q,\, r \in R.\, \mathfrak{c}(q,r) \wedge \Diamond^P_{Q \cup R}\phi(q, r)$.
    \item $\Diamond^P_{Q \cup R} \phi(q, r) \vdash \Diamond^P_Q \phi(q) \wedge \Diamond^P_R \phi(r)$.
    \item $\Box^P_{Q \cap R}\phi = \forall q \in Q,\, r \in R.\, (\mathfrak{c}(q,r) \Rightarrow \Box^P_{Q \cup R}\phi(q, r)$).
    \item $\Box^P_Q \phi(q) \vee \Box^P_R \phi(r)  \vdash  \Box^P_{Q \cup R} \phi(q, r)$.
\end{enumerate}
\end{prop}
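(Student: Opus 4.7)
The plan is to unfold each inter-modality using Definition \ref{defn:intermodalities} and then exploit the explicit descriptions of $B_{Q \cap R}$ and $B_{Q \cup R}$ from Section \ref{subsec:compatibility.and.the.lattice.of.parts}. The key observation is a re-bracketing of quantifiers: every $s \in B_S$ determines a compatible pair $(\rest{s}{Q}, \rest{s}{R}) \in B_{Q \cup R}$, which in turn represents the single class $\rest{s}{Q \cap R} \in B_{Q \cap R}$; conversely, each $(q, r) \in B_{Q \cup R}$ arises as $(\rest{s}{Q}, \rest{s}{R})$ for some $s \in B_S$, and such $s$ restricts to the common class of $q$ and $r$ in $B_{Q \cap R}$. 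So a quantifier over $s \in B_S$ restricting to a class $x \in B_{Q \cap R}$ can always be re-bracketed as a quantifier over compatible pairs $(q, r)$ representing $x$, followed by a quantifier over $s$ restricting to $(q, r)$.

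For (1), I would unfold $\allows{P}{Q \cap R}\phi(x)$ as $\exists s \in B_S.\, \rest{s}{Q \cap R} = x \wedge \phi(\rest{s}{P})$ and apply the re-bracketing above to turn the inner existential into $\exists (q, r) \in B_{Q \cup R}.\, \allows{P}{Q \cup R}\phi(q, r)$, with the compatible pair implicitly restricted to represent $x$. Part (3) follows by the dual argument with $\forall$ in place of $\exists$, or, assuming excluded middle, directly from (1) by Proposition \ref{prop.deMorgan}. The main subtlety here lies in parsing the right-hand sides correctly: since elements of $B_{Q \cap R}$ are equivalence classes in the quotient $(B_Q + B_R)/\mathfrak{c}$ rather than literal elements of $B_Q$ or $B_R$, the bound variables $q \in B_Q$ and $r \in B_R$ must be understood as implicitly required to represent the same class as the input in $B_{Q \cap R}$. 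Once this convention is fixed, the equivalence is just a reassociation of quantifiers over $B_S$, using that compatibility of $\rest{s}{Q}$ and $\rest{s}{R}$ is automatic.

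Parts (2) and (4) are then purely formal projections. For (2), any witness $s$ of $\allows{P}{Q \cup R}\phi(q, r)$ satisfies $\rest{s}{Q} = q$, $\rest{s}{R} = r$, and $\phi(\rest{s}{P})$, so the same $s$ simultaneously witnesses $\allows{P}{Q}\phi(q)$ and $\allows{P}{R}\phi(r)$. For (4), $\ensure{P}{Q}\phi(q)$ entails $\ensure{P}{Q \cup R}\phi(q, r)$ because the universal on the right ranges over a subset of the $s$ ranged over on the left (those additionally satisfying $\rest{s}{R} = r$); symmetrically for $R$, and taking the join of the two entailments gives the claim. The expected main obstacle, therefore, is not any of the four arguments in isolation but rather the interpretive step in (1) and (3) of making the implicit ``$q$ and $r$ both represent $x$'' convention explicit and verifying that the resulting quantifier reorganization is sound.
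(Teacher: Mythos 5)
Your proposal is correct and takes essentially the same route as the paper's own proof: unfold the inter-modalities, pass between behaviors $s \in B_S$ and compatible pairs in $B_{Q \cup R}$ for item (1), project the single witness $s$ for item (2), and dualize (replacing $\exists$ by $\forall$) for items (3) and (4). Your explicit remark that $q$ and $r$ must be read as restricting to the given element of $B_{Q \cap R}$ is precisely the implicit convention the paper's proof also relies on, so it is a clarification rather than a divergence.
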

\begin{proof}
We prove the first two; the remaining two are dual.
\begin{enumerate}
    \item ($\Rightarrow$) Suppose that $\allows{P}{Q \cap R}\phi(a)$. Then there is an $s \in B_S$ so that $\rest{s}{Q \cap R} = a$ and $\phi(\rest{s}{P})$. But then $\rest{s}{Q}$ and $\rest{s}{R}$ are compatible and $\rest{s}{Q \cup R} = (\rest{s}{Q},\, \rest{s}{R})$, so that $\allows{P}{Q \cup R}\phi(\rest{s}{Q},\, \rest{s}{R})$.
    
    ($\Leftarrow$) Suppose that there are compatible $q \in Q$ and $r \in R$ with $\allows{P}{Q \cup R}\phi(q, r)$. Since $q$ and $r$ are compatible, $\rest{q}{Q \cap R} = \rest{r}{Q \cap R}$, so that $\allows{P}{Q \cap R}\phi(\rest{q}{Q \cap R})$.
    \item This is mostly a matter of unpacking definitions:
        \begin{align*}
            \allows{P}{Q \cap R}\phi(q, r) &= \exists s.\, (\rest{s}{Q \cup R} = (q, r)) \meet \phi(\rest{s}{P}) \\
                &= \exists s.\, (\rest{s}{Q} = q) \meet (\rest{s}{R} = r) \meet \phi(\rest{s}{P}) \\
                &\vdash (\exists s.\, (\rest{s}{Q} = q) \meet \phi(\rest{s}{P})) \meet (\exists s.\, (\rest{s}{R} = r) \meet \phi(\rest{s}{P})) \\
                &= \allows{P}{Q}\phi(q) \wedge \allows{P}{R}\phi(r). \qedhere
        \end{align*} 
\end{enumerate}
\end{proof}

\subsection{Allowance and ensurance between a part and subpart}
When $P$ determines $Q$, or equivalently $Q$ is a part of $P$ (see \cref{Things:lem:Determines.Mereology}), we have the following additional properties.
\begin{prop}[Allowance and ensurance of a part] \label{Things:prop:parts}
Suppose that $P \geq Q$. Then
\begin{align*}
    \allows{P}{Q} &= \exists^P_Q = \exists p.\, (\rest{p}{Q} = q) \meet \phi(p) \\
    \ensure{P}{Q} &= \forall^P_Q = \forall p.\, (\rest{p}{Q} = q) \Rightarrow \phi(p) \\
    \allows{Q}{P} &= \Delta^Q_P = \ensure{Q}{P}
\end{align*}
Conversely, if there are parts $P$, $Q$ of $S$, such that $\allows{Q}{P} \vdash \ensure{Q}{P}$, then $P \geq Q$.
\end{prop}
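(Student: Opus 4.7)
The plan is to verify each identity in the forward direction by unpacking the definitions of $\allows{P}{Q}$ and $\ensure{P}{Q}$ and exploiting the factorization $B_S \twoheadrightarrow B_P \twoheadrightarrow B_Q$ guaranteed by $P \geq Q$. The converse direction will follow by specializing to singleton constraints and invoking \Cref{Things:lem:Determines.Mereology}(4).

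For the forward direction, fix a constraint $\phi$ on $P$ and $q \in B_Q$. I would start from $\allows{P}{Q}\phi(q) = \exists s \in B_S.\, (\rest{s}{Q} = q) \meet \phi(\rest{s}{P})$. The commuting triangle defining $P \geq Q$ says $\rest{s}{Q} = \rest{(\rest{s}{P})}{Q}$, so $\rest{s}{Q}$ depends on $s$ only through $\rest{s}{P}$; moreover, surjectivity of $B_S \twoheadrightarrow B_P$ produces, for every $p \in B_P$, some $s$ with $\rest{s}{P} = p$. Hence the existential over $s$ reduces to an existential over $p$, giving $\exists p \in B_P.\, (\rest{p}{Q} = q) \meet \phi(p) = \exists^P_Q \phi(q)$. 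The argument for $\ensure{P}{Q} = \forall^P_Q$ is wholly parallel, with the universal taking the place of the existential and the same two facts justifying the reparameterization in each direction. For the chain $\allows{Q}{P} = \Delta^Q_P = \ensure{Q}{P}$, I would note that whenever $\rest{s}{P} = p$, the value $\rest{s}{Q}$ is \emph{forced} to equal $\rest{p}{Q}$: the existential collapses to $\phi(\rest{p}{Q})$ by surjectivity, and the universal collapses to the same value because every witness $s$ yields the same $\rest{s}{Q}$.

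For the converse, assume $\allows{Q}{P}\phi \vdash \ensure{Q}{P}\phi$ for every $\phi : B_Q \to \Prop$. Applying this to the characteristic predicate $(=q)$ and using the pointwise identifications $\mathfrak{c}(p,q) = \allows{Q}{P}(=q)(p)$ and $\mathfrak{d}(p,q) = \ensure{Q}{P}(=q)(p)$ recorded after \Cref{defn:intermodalities}, I obtain $\mathfrak{c}(p,q) \Rightarrow \mathfrak{d}(p,q)$ for every $p \in B_P$ and $q \in B_Q$. This is condition~(4) of \Cref{Things:lem:Determines.Mereology}, so $P \geq Q$.

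The proof is a largely formal unfolding of definitions, and I do not expect a serious obstacle. The only delicate point is the forward direction's reparameterization: I must invoke both the commutativity of the triangle $B_S \twoheadrightarrow B_P \twoheadrightarrow B_Q$ and the surjectivity of $B_S \twoheadrightarrow B_P$ at once, since either fact alone gives only one half of each identity (commutativity handles the implications where $s$ is given, surjectivity handles those where $p$ is given).
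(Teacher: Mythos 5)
Your argument is correct, and the converse direction coincides with the paper's proof exactly: specialize to the predicates $(=q)$, read off $\mathfrak{c}(p,q)\vdash\mathfrak{d}(p,q)$, and invoke \cref{Things:lem:Determines.Mereology}. The forward direction differs in style rather than substance: the paper observes that $\exists_P\Delta^P=\id$ (and dually $\forall_P\Delta^P=\id$) because $B_S\twoheadrightarrow B_P$ is epi, and then computes in one line via functoriality, e.g.\ $\allows{P}{Q}=\exists_Q\Delta^P=\exists^P_Q\exists_P\Delta^P=\exists^P_Q$ and $\allows{Q}{P}=\exists_P\Delta^Q=\exists_P\Delta^P\Delta^Q_P=\Delta^Q_P$, with the two ``dual'' identities for $\ensure{}{}$ following by the same token; you instead reparameterize the quantifiers over $s\in B_S$ into quantifiers over $p\in B_P$ by hand, which is exactly the element-level content of those identities. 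The paper's route buys brevity and makes all four identities fall out of two reusable facts (functoriality and $\exists_P\Delta^P=\id$), and it transports more cleanly to an arbitrary topos; your route is more elementary and makes visible precisely where commutativity of the triangle versus surjectivity of $\rest{}{P}$ are used, a point you correctly flag. One small phrasing caution: in your collapse of $\ensure{Q}{P}$ to $\Delta^Q_P$, constancy of $\rest{s}{Q}$ over witnesses is not quite enough --- you also need at least one witness $s$ with $\rest{s}{P}=p$ (surjectivity again), since otherwise the universal would hold vacuously while $\phi(\rest{p}{Q})$ could fail; your final sentence shows you know this, but it should be said at that step too.
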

\begin{proof}
Note that as $S \twoheadrightarrow P$ is epi, $\exists^S_P\Delta^P_S = \id^P_P$. By definition, $\allows{P}{Q} = \exists_Q \Delta^P = \exists^P_Q \exists_P \Delta^P = \exists^P_Q$, and dually. Similarly, $\allows{Q}{P} = \exists_P\Delta^Q = \exists_P \Delta^P \Delta^Q_P = \Delta^Q_P$, and dually. For the converse, note that if $\allows{Q}{P} \vdash \ensure{Q}{P}$, then in particular $\allows{Q}{P}(=q)(p) \vdash \ensure{Q}{P}(=q)(p)$, or in other words $\mathfrak{c}(p, q) \vdash \mathfrak{d}(p, q)$; by  \cref{Things:lem:Determines.Mereology} this is equivalent to $P \geq Q$.
\end{proof}

Thus allowance and ensurance in the case of a part is simple: $q \in B_Q$ allows a constraint $\phi$ on $P$ if there exists $p \in B_Q$ that restricts to $q$ and $\phi(p)$, and $q$ ensures $\phi$ if all $p$ that restrict to $q$ obey $\phi$. On the other hand, $p \in B_Q$ allows a constraint $\psi$ on $Q$ iff $p$ ensures $\psi$ iff $\rest{p}{Q}$ obeys $\psi$, and in fact this gives another characterization of the notion of part.

We may make slightly more general statements as follows.
\begin{prop}\label{Things:prop:parts.operations}
Suppose that $P \leq P'$ and $Q' \leq Q$. Then
\begin{enumerate}
    \item $\allows{P'}{P} \allows{Q}{P'} = \allows{Q}{P}$.
    \item $\ensure{P'}{P} \ensure{Q}{P'} = \ensure{Q}{P}$.
    \item $\allows{P}{Q} \Delta^{Q}_{Q'} = \allows{P}{Q'}$.
    \item $\ensure{P}{Q} \Delta^{Q}_{Q'} = \ensure{P}{Q'}$.
\end{enumerate}
\end{prop}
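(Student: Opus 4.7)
My plan is to reduce each of the four identities to a one-line calculation with the adjoint triple $(\exists,\Delta,\forall)$, by unfolding the inter-modalities into their adjoint-triple primitives and then invoking functoriality. The ingredients are: the definitional unfoldings $\allows{A}{B}=\exists^S_B\Delta^A_S$ and $\ensure{A}{B}=\forall^S_B\Delta^A_S$; the identifications from Proposition \ref{Things:prop:parts} that whenever $B\leq A$ one has $\allows{A}{B}=\exists^A_B$ and $\ensure{A}{B}=\forall^A_B$ (to be applied to $\allows{P'}{P}$, $\ensure{P'}{P}$, $\allows{Q}{Q'}$, and $\ensure{Q}{Q'}$); and the functoriality identities $\exists^Q_R\exists^P_Q=\exists^P_R$, $\Delta^R_Q\Delta^Q_P=\Delta^R_P$, and $\forall^Q_R\forall^P_Q=\forall^P_R$ for any composable chain $R\leq Q\leq P$.

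For (1), the chain $P\leq P'\leq S$ is composable, so rewriting $\allows{P'}{P}=\exists^{P'}_P$ via Proposition \ref{Things:prop:parts} and then applying $\exists$-functoriality gives
\[
\allows{P'}{P}\,\allows{Q}{P'}
=\exists^{P'}_P\,\exists^S_{P'}\,\Delta^Q_S
=\exists^S_P\,\Delta^Q_S
=\allows{Q}{P}.
\]
Item (2) is the identical calculation with $\forall$ in place of $\exists$ throughout.

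For (3) and (4), the argument is entirely analogous but acts on the target index of $\allows{P}{Q}$ and $\ensure{P}{Q}$ rather than on the source. Here one uses the chain $S\geq Q\geq Q'$ together with the identification from Proposition \ref{Things:prop:parts} that rewrites the operation $\Delta^Q_{Q'}$ in terms of the adjoint triple for the surjection $B_Q\twoheadrightarrow B_{Q'}$; a single application of $\exists$- or $\Delta$-functoriality (respectively $\forall$- or $\Delta$-functoriality) then collapses the two consecutive operations into one, producing $\allows{P}{Q'}$ (resp.\ $\ensure{P}{Q'}$) on the nose.

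The main obstacle is bookkeeping: tracking which index is source and which is target, and choosing whether to invoke $\exists$-, $\Delta$-, or $\forall$-functoriality at a given step. Once the definitions are unfolded and Proposition \ref{Things:prop:parts} is applied, every identity is immediate from a single invocation of the adjoint-triple axioms; no genuinely new ideas are required beyond those already established.
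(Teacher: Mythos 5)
Your handling of items (1) and (2) is correct and is exactly the paper's (telegraphic) argument: rewrite $\allows{P'}{P}=\exists^{P'}_P$ and $\ensure{P'}{P}=\forall^{P'}_P$ via \cref{Things:prop:parts}, then collapse $\exists^{P'}_P\exists^S_{P'}=\exists^S_P$ (resp.\ $\forall^{P'}_P\forall^S_{P'}=\forall^S_P$) by functoriality.

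For items (3) and (4), however, your plan as described does not typecheck, and the phrase ``acts on the target index'' papers over the problem. Since $Q'\leq Q$, the only $\Delta$ available for this pair is pullback along the surjection $B_Q\twoheadrightarrow B_{Q'}$, which is a map $\Prop^{B_{Q'}}\to\Prop^{B_Q}$; there is no ``$\Delta$'' sending constraints on $Q$ to constraints on $Q'$ (that would require a function $B_{Q'}\to B_Q$). So the $\Delta$ in items (3)--(4) must be \emph{pre}composed with the intermodality: it changes the part on which the input constraint lives (the superscript) from $Q'$ to $Q$, not the target part. This is precisely how the paper uses item~4 in the ecosystem example, where $\ensure{\bigvee_{t\geq d}\type{Rabbit}_t}{\type{Fox}_0}\,\Delta^{\type{Rabbit}_t}_{\bigvee_{t\geq d}\type{Rabbit}_t}=\ensure{\type{Rabbit}_t}{\type{Fox}_0}$ (the indices in the printed statement of (3)--(4) are garbled, and the example fixes the intended reading). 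Read this way, the proof is a single $\Delta$-functoriality step and needs neither \cref{Things:prop:parts} nor $\exists$/$\forall$-functoriality: for any part $R$ of $S$,
\[
\allows{Q}{R}\,\Delta^{Q'}_{Q}
=\exists^S_R\,\Delta^{Q}_S\,\Delta^{Q'}_{Q}
=\exists^S_R\,\Delta^{Q'}_S
=\allows{Q'}{R},
\]
and identically with $\forall^S_R$ in place of $\exists^S_R$ for ensurance. If instead you insist on postcomposing with an operator $\Prop^{B_Q}\to\Prop^{B_{Q'}}$, that operator is $\exists^Q_{Q'}$ for allowance and $\forall^Q_{Q'}$ for ensurance (two different operators, neither of them a $\Delta$), and the identities $\exists^Q_{Q'}\allows{P}{Q}=\allows{P}{Q'}$ and $\forall^Q_{Q'}\ensure{P}{Q}=\ensure{P}{Q'}$ again follow from functoriality --- but your write-up mixes these two readings, invoking ``the adjoint triple for $B_Q\twoheadrightarrow B_{Q'}$'' while keeping the composite in an order in which no such operator can be inserted. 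State which composite you are proving and the one-line calculation above closes the gap.
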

\begin{proof}
This follows from the functoriality of $\exists$, $\forall$, and $\Delta$.
\end{proof}

\begin{ex}
In the ecosystem example, we were trying to ensure that rabbits were kept in check after the deadline $d$:
$$\ensure{\bigvee_{t \geq d }\type{Rabbit}_t}{\type{Fox}_0}(\forall t \geq d.\, \term{inCheck}(r_t)).$$
Using the above lemmas, we can change this into a simpler form. First, since ensurance is a right adjoint (\cref{Things:prop:compatible.and.ensures} item 3), we may rewrite our constraint as
\[
\forall t \geq d.\, \ensure{\bigvee_{t \geq d} \type{Rabbit}_t}{\type{Fox}_0}\term{inCheck}(r_t) 
\]
and then, because $\term{inCheck}(r_t)$ is more explicitly $\Delta_{\bigvee_{t \geq d} \type{Rabbit}_t}^{\type{Rabbit}_t}\term{inCheck}$ we may use \cref{Things:prop:parts.operations} item 4 to rewrite this as
\[
\forall t \geq d.\, \ensure{\type{Rabbit}_t}{\type{Fox}_0}\term{inCheck}(r_t) \]
In particular, we see that the suitability of a given initial fox population for keeping the rabbits in check may be determined by considering each time $t \geq d$ independently.
\end{ex}

\subsection{Allowance and ensurance, possibility and necessity}\label{subsec.modal}

Finally, we describe the manner in which our intermodalities generalize the classical alethic modalities of possibility and necessity.

Fix a part $P$. Then for any part $Q$, we obtain two modalities on $P$ by composing our intermodalities from $P$ to $Q$ with their corresponding intermodality from $Q$ to $P$. 
\begin{prop}
The operators $\allows{Q}{P}\allows{P}{Q}$ and $\ensure{Q}{P}\ensure{P}{Q}$ are a pair of adjoint modalities on $\Prop^{B_P}$. They are the identity modality if and only if $P \leq Q$.
\end{prop}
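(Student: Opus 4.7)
The plan is to handle the adjunction and the identity characterization separately. The adjunction $\allows{Q}{P}\allows{P}{Q} \dashv \ensure{Q}{P}\ensure{P}{Q}$ on $\Prop^{B_P}$ is immediate from composing the two elementary adjunctions $\allows{P}{Q} \dashv \ensure{Q}{P}$ and $\allows{Q}{P} \dashv \ensure{P}{Q}$ supplied by \cref{Things:prop:compatible.and.ensures}(3). Since right adjoints between posets are unique, one composite equals $\id$ if and only if the other does, so it suffices to characterize when the ``allows'' composite is the identity.

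For the implication $P \leq Q \Rightarrow \id$, I would invoke the mirror image of \cref{Things:prop:parts}, swapping the roles of $P$ and $Q$ (the statement there is phrased with $P \geq Q$, whereas our hypothesis is $Q \geq P$), to obtain $\allows{P}{Q} = \Delta^P_Q$ and $\allows{Q}{P} = \exists^Q_P$. The composite $\exists^Q_P \Delta^P_Q$ evaluated at a constraint $\phi$ and point $p \in B_P$ reduces to $\bigl(\exists q \in B_Q.\, \rest{q}{P} = p\bigr)\wedge \phi(p)$, which collapses to $\phi(p)$ by surjectivity of $\rest{}{P}\colon B_Q \twoheadrightarrow B_P$.

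For the converse, I would test the hypothesis $\allows{Q}{P}\allows{P}{Q}\phi = \phi$ against the singleton predicate $(=p_0)$ for an arbitrary $p_0 \in B_P$. Unfolding the definitions yields, for every $p \in B_P$, the equivalence $(p = p_0) \Longleftrightarrow \exists q \in B_Q.\, \mathfrak{c}(p,q)\wedge \mathfrak{c}(p_0, q)$. The forward direction holds by picking any $q$ compatible with $p_0$ (such $q$ exists because $\rest{}{Q}\colon B_S \twoheadrightarrow B_Q$ is surjective and $p_0$ lifts to $B_S$); the reverse direction is the substantive content, asserting that any $q \in B_Q$ compatible with two elements of $B_P$ forces them to be equal. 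Thus each $q \in B_Q$ is compatible with at most one $p \in B_P$, and (by the same surjectivity argument) at least one, so there is exactly one. Item 2 of \cref{Things:lem:Determines.Mereology}, again with $P$ and $Q$ swapped, then delivers $P \leq Q$. The only real obstacle is the bookkeeping around the $\leq$/$\geq$ convention when invoking these two prior propositions; the rest is routine assembly.
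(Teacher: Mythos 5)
Your proof is correct, and the adjointness half coincides with the paper's (compose the two adjunctions from \cref{Things:prop:compatible.and.ensures}(3); your remark that uniqueness of adjoints between posets lets you check only the ``allows'' composite is a nice way to make explicit something the paper leaves tacit). Where you diverge is the identity characterization. The paper's proof is a transposition argument: it notes that the composite is the identity iff $\allows{Q}{P}\allows{P}{Q}\phi \vdash \phi$ for all $\phi$ (the other containment always holds, a point the paper glosses), moves this across the adjunction $\allows{Q}{P} \dashv \ensure{P}{Q}$ to get $\allows{P}{Q}\phi \vdash \ensure{P}{Q}\phi$, and then cites both directions of \cref{Things:prop:parts}. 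You instead argue at the level of elements: for $P \leq Q$ you use the formulas $\allows{P}{Q} = \Delta^P_Q$, $\allows{Q}{P} = \exists^Q_P$ from \cref{Things:prop:parts} and compute $\exists^Q_P\Delta^P_Q = \id$ directly from surjectivity of $B_Q \twoheadrightarrow B_P$; for the converse you test against singleton predicates $(=p_0)$ and land on $\mathfrak{c}$-determination, invoking \cref{Things:lem:Determines.Mereology}(2). In effect you have inlined the content of the converse of \cref{Things:prop:parts}, whose own proof in the paper also proceeds by testing on $(=q)$ and appealing to \cref{Things:lem:Determines.Mereology}; so your route is more self-contained and sidesteps the unstated lemma that $\phi \vdash \allows{Q}{P}\allows{P}{Q}\phi$ always holds, at the cost of redoing work the paper delegates to earlier results. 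One small wording slip: the existence of a $q$ compatible with $p_0$ comes from lifting $p_0$ along the surjection $\rest{}{P}\colon B_S \twoheadrightarrow B_P$ and then restricting to $Q$; surjectivity of $\rest{}{Q}$ is what you need later, to show each $q$ is compatible with \emph{at least} one $p$.
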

\begin{proof}
By \cref{Things:prop:compatible.and.ensures} item 3, these two modalities are the composites of adjoint pairs of operators, and hence are adjoint themselves. Moreover, $\allows{Q}{P}\allows{P}{Q}$ is the identity if and only if $\allows{Q}{P}\allows{P}{Q} \phi \vdash \phi$ for all $\phi$, which occurs if and only if $\allows{P}{Q}\phi \vdash \ensure{P}{Q}\phi$ for all $\phi$, which occurs if and only if $P \leq Q$ (by \cref{Things:prop:parts}).
\end{proof}

These modalities describe constraints on $P$ as seen through the part $Q$. To
obtain a description of possibility and necessity, assume that $B_S$ is inhabted
--- that there is some behavior of the system. We let $Q=\bot$ be the system whose behavior type $B_Q=\ast$ consists of just a single element. Then the adjoint modalities
$$\allows{\bot}{P}\allows{P}{\bot} \qquad \textrm{left adjoint to} \qquad \ensure{\bot}{P}\ensure{P}{\bot}$$
describe possibility and necessity on $B_P$. 

For example, for any constraint $\varphi$ on $B_P$, the constraint $\allows{\bot}{P}\allows{P}{\bot}(\varphi)$ maps all elements of $B_P$ to $\true$ if there is some behavior $p$ that satisfies $\varphi$, and maps all elements to $\false$ otherwise. Thus the modality detects whether $\varphi$ is \emph{possible}: that is, whether there is some behavior that satisfies $\varphi$. 

On the other hand, $\ensure{\bot}{P}\ensure{P}{\bot}(\varphi)$ is the constraint that maps all elements of $B_P$ to $\true$ if all behaviors $p \in B_P$ satisfy $\varphi$, and maps all elements to $\false$ otherwise; thus this modality detects whether $\varphi$ is always satisfied, and hence \emph{necessary}.

More generally, the usual semantics of the ``it is possible that'' and ``it is necessary that'' modalities $\Diamond$ and $\Box$ take place in a Kripke frame $(W,A)$, where $W$ is a set, known as the set of worlds, and $A$ is a binary relation on $W$ known as the accessibility relation. The predicate then $\Diamond(\varphi)(w)$ holds if $\varphi(w')$ for \emph{some} $w'$ such that $wAw'$, and $\Box(\varphi)(w)$ holds if $\varphi(w')$ for \emph{all} $w'$ such that $wAw'$ \cite{kripke1963semantical}. If $A$ is an equivalence relation, then we may equivalently describe $A$ by an epi $W \twoheadrightarrow W/A$. In this case, we have $\Diamond=\allows{W/A}{W}\allows{W}{W/A}$ and $\Box= \ensure{W/A}{W}\ensure{W}{W/A}$ as modalities on $\Prop^W$. 

\section{Outlook and Conclusion}
We have presented a logic that describes how constraints---restrictions on behavior---are passed from one part of a system to another. While we have presented this from a set theoretic point of view, we have taken care to use arguments that are valid in any topos (with the noted exception of \cref{prop.deMorgan}, which only holds in boolean toposes). As a consequence, our logic retains its character as a logic of constraint passing across a wide variety of semantics. One possibly valuable notion of semantics is one that captures a notion of time.

Indeed, behavior is best conceived as occurring over time, though of course the question of what time \emph{is} remains an issue. One can imagine that a system has, for any interval or ``window'' of time, a set of possible behaviors, and that each such behavior can be cropped or ``clipped'' to any smaller window of time. This is the perspective of \emph{temporal type theory} \cite{Schultz.Spivak:2017a}. While that work uses topos theory in a significant way, the main idea is easy enough.

Whether we speak of a bicycle, an ecosystem, or anything else that could be said to exist in time, it is possible to consider the set of behaviors of that thing over an interval of time, say over the ten-minute window $(0,10)$. Above we often discussed an idea which can be generalized to any system $S$ that exists in time. Namely, we can consider different parts of time as parts of $S$. Given any behavior $s$ over the 10-minute window, we can clip it to the first minute $\rest{s}{(0,1)}$; this gives a function $S(0,10)\to S(0,1)$, which is often called \emph{restriction}, though we will continue to call it clipping. Let's assume that every possible behavior at $(0,1)$ extends to some behavior over the whole interval---i.e.\ that the universe doesn't just end under certain conditions on $(0,1)$-behavior---at which point we have declared that the clipping function is surjective, and hence gives a part in the sense of section \cref{def.part}. We call it a \emph{temporal part}.

What then does it mean to pass constraints between temporal parts? The idea
begins to take on a control-theoretic flavor: behavioral constraints at one time
window may allow or ensure constraints at other time windows. A mother could say
``doing this now ensures no dessert tonight''. The child could ask ``does our
position on the road now allow me to play with Rutherford this afternoon?'' A control system could attempt to solve the problem ``what values of parameter $P$ can I choose, 5 seconds from now, that both allow current conditions and ensure that in 10 minutes we will achieve our target?''

In any case, as mentioned in the introduction, our original goal was to understand what makes a thing a thing, e.g.\ what gives things like bricks the quality of being cohesive (not two bricks) and closed (not the left half of a brick). We believe that a good logic of constraint passing between parts is essential for that, but perhaps not sufficient. The question of what additional structures need to be added or considered in order construct a viable notion of thing, remains future work.

\printbibliography
\end{document}